\theoremstyle{plain}
\numberwithin{equation}{section}
\newtheorem{thm}{Theorem}[section]
\newtheorem{prop}[thm]{Proposition}
\newtheorem{lemma}[thm]{Lemma}
\newtheorem{corollary}[thm]{Corollary}
\newtheorem{conj}[thm]{Conjecture}
\theoremstyle{remark}
\newtheorem{remark}[thm]{Remark}
\theoremstyle{definition}
\newtheorem{defn}[thm]{Definition}
\newtheorem{example}[thm]{Example}
\theoremstyle{plain}
\numberwithin{equation}{chapter}
\newtheorem{thm}{Theorem}[chapter]
\newtheorem{prop}[thm]{Proposition}
\newtheorem{lemma}[thm]{Lemma}
\theoremstyle{definition}
\theoremstyle{remark}
\newtheorem{remark}[thm]{Remark}
\renewenvironment{proof}{{\noindent \textbf{Proof:} }}{\hfill $\Box$\medskip}
\def\fC{\mathbb{C}} 
\def\fZ{\mathbb{Z}}
\def\fQ{\mathbb{Q}}
\def\[[{\llbracket}
\def\]]{\rrbracket}
\def\Dbar{\leavevmode\lower.6ex\hbox to 0pt{\hskip-.23ex
\accent"16\hss}D}
\def\dbar{\leavevmode\hbox to 0pt{\hskip.2ex
\accent"16\hss}d}
\def\polhk#1{\setbox0=\hbox{#1}{\ooalign{\hidewidth
\lower1.5ex\hbox{`}\hidewidth\crcr\unhbox0}}}
\def\cftil#1{\ifmmode\setbox7\hbox{$\accent"5E#1$}\else
\setbox7\hbox{\accent"5E#1}\penalty 10000\relax\fi\raise 1\ht7
\hbox{\lower1.15ex\hbox to 1\wd7{\hss\accent"7E\hss}}\penalty 10000
\hskip-1\wd7\penalty 10000\box7}
\def\cfudot#1{\ifmmode\setbox7\hbox{$\accent"5E#1$}\else
\setbox7\hbox{\accent"5E#1}\penalty 10000\relax\fi\raise 1\ht7
\hbox{\raise.1ex\hbox to 1\wd7{\hss.\hss}}\penalty 10000
\hskip-1\wd7\penalty 10000\box7}
\def\<{\prec} 
\def\>{\succ}
\begin{document}


\title{The double of representations of Cohomological Hall Algebra for $A_1$-quiver}

\author{Xinli Xiao}

\date{\today}

\address{Mathematical Department, Kansas State University,
Cardwell Hall 128, Manhattan, Kansas, 66502} \email{xiaoxl@math.ksu.edu}

\keywords{Cohomological Hall algebra, Grassmannians, quivers, double construction}

\baselineskip=18pt

\begin{abstract}
We compute two representations of COHA for $A_1$-quiver. The two untwisted representations can be combined into a representation of $D_{n+1}$ Lie algebra. The untwisted increasing representation and the twisted decreasing representation can be combined into a representation of a finite Clifford algebra.
\end{abstract}

\maketitle

\topmargin -0.1in
\renewcommand{\thepage}{\arabic{page}}

\bibliographystyle{mybst}


\section{introduction}

The aim of this paper is to define and discuss two representations of the Cohomological Hall algebras, and combine them into a single representation of the algebra which is called ``full'' (or ``double'') COHA in \cite{Soi2014}.

Cohomological Hall algebra (COHA for short) was introduced in \cite{KoS2011}. The definition is similar to the definition of conventional Hall algebra (see e.g. \cite{Sch2006}) or its motivic version (see e.g. \cite{KoS2008}). Instead of spaces of constructible functions on the stack of objects of an abelian category, one considers cohomology groups of the stacks. The product is defined through the pullback-pushforward construction. Details can be found in \cite{KoS2011}.

By analogy with conventional Hall algebra of a quiver, which gives the ``positive'' part of a quantization of the corresponding Lie algebra, one may want to define the ``double'' COHA, for which the one defined in \cite{KoS2011} would be a ``positive part''. Following the discussion in \cite{Soi2014}, we study the double of representations of COHA, and hope to find the double of COHA through its representations.

This paper focuses on $A_1$-quiver. Stable framed representations of the quiver are used to produce two representations of COHA. Since the moduli spaces of stable framed representations of $A_1$-quiver are Grassmannians, we actually define two representations on the cohomology of Grassmannians. We show that the operators from these two representations form $D_{n+1}$-Lie algebra. We also make a modification to the decreasing representation and form a twisted decreasing representation. The operators from untwisted increasing operators and twisted decreasing operators form a finite Clifford algebra. These confirm the conjecture from \cite{Soi2014} that the double of $A_1$-COHA is the infinite Clifford algebra.


\section{Two geometric representations of $A_1$-COHA}

\subsection{COHA}

Let $Q$ be a quiver with $N$ vertices. Given a dimension vector $\gamma=(\gamma_i)_{i=1}^N$, $ M_{\gamma}$ is the space of complex representations with fixed underlying vector space $\bigoplus_{i=1}^{N} \fC^{\gamma_i}$ of dimension vector $\gamma$, and $G_{\gamma}=\prod_{i=1}^NGL_{\gamma_i}(\fC)$ is the associated gauge group. $[ M_{\gamma}/G_{\gamma}]$ is the stack of representations of $Q$ with fixed dimension vector $\gamma$. As a vector space, COHA of $Q$ is defined to be $\mathcal H:=\bigoplus_{\gamma}\mathcal H_{\gamma}:=\bigoplus_{\gamma}H^*([M_{\gamma}/G_{\gamma}]):=\bigoplus_{\gamma}H^*_{G_{\gamma}}( M_{\gamma})$. Here by equivariant cohomology of a complex algebraic variety $M_{\gamma}$ acted by a complex algebraic group $G_{\gamma}$ we mean the usual (Betti) cohomology with coefficients in $\fQ$ of the bundle $EG_{\gamma}\times_{G_{\gamma}}M_{\gamma}$ associated to the universal $G_{\gamma}$-bundle $EG_{\gamma}\rightarrow BG_{\gamma}$ over the classifying space of $G_{\gamma}$. The product $*:\mathcal H\otimes \mathcal H\rightarrow \mathcal H$ is defined by means of the pullback-pushforward construction in \cite{KoS2011}.

\subsection{$A_1$-COHA}\label{incre_basis}
Let $Q$ be $A_1$. $N=1$. Since there is only one representation with fixed underlying vector space $\fC^{d}$ of dimension $d$, $M_{d}$ is a point and $G_{d}=GL_d(\fC)$. Therefore $\mathcal H_d=H^*_{GL_d(\fC)}(  M_d)=\fQ[x_{1,d},\ldots,x_{d,d}]^{S_d}$ is the algebra of symmetric polynomials in variables $x_{1,d},\ldots,x_{d,d}$. It is possible to talk about the geometric interpretation of these variables. They can be treated as the first Chern classes of the tautological bundles over the classifying space of $G_d$. For details see e.g. \cite{Xia2013}.

The COHA $\mathcal H$ for quiver $A_1$ is described in \cite{KoS2011}. It is the infinite exterior algebra generated by odd elements $\phi_0, \phi_1,\phi_2\ldots$ with wedge $\wedge$ as its product. Generators $(\phi_i)_{i\geq 0}$ correspond to the additive generators $(x^i_{1,1})_{i\geq0}$ of $\mathcal H_1=\fQ[x_{1,1}]$. A monomial in the exterior algebra
$$\phi_{k_1}\wedge\ldots\wedge\phi_{k_d}\in\mathcal H_d,\quad 0\leq k_1<\ldots<k_d$$
corresponds to the Schur symmetric polynomial $s_{\lambda}(x_{1,d},\ldots,x_{d,d})$, where $\lambda=(\lambda_d,\ldots,\lambda_1)=(k_d-d+1,k_{d-1}-d+2,\ldots,k_1)$ is a partition.

Let $\Phi_{\bf k}=\phi_{k_1}\wedge\ldots\wedge\phi_{k_d}$ with index ${\bf k}=(k_1,\ldots,k_d)$, $0\leq k_1<\ldots<k_d$. Denote by ${\bf k}(\lambda)$ the index related to the partition $\lambda$ and by $\lambda({\bf k})$ the partition related to the basis index $\bf k$. Then we have $\Phi_{{\bf k}(\lambda)}=s_{\lambda(\bf k)}$.

\subsection{Stable framed representations}\label{section23}
Fix a dimension vector ${\bf n}=(n_i)_{i=1}^N$. A {\it framed representation} of $Q$ of dimension vector $\gamma$ is a pair $(V,f)$, where $V$ is an ordinary representation of $Q$ of dimension $\gamma$ and $f=(f_i)_{i=1}^{N}$ is a collection of linear maps from $\fC^{n_i}$ to $V_i$. The set of framed representations of dimension vector $\gamma$ with framed structure dimension vector $\bf n$ is denoted by ${\hat M}_{\gamma,{\bf n}}$. It carries a natural gauge group $G_{\gamma}$-action. See e.g. \cite{Rei2008a}.

For the notion of stable framed representation of a quiver, see e.g. \cite{Rei2008} (more general framework of triangulated categories can be found in \cite{Soi2014}). We focus on the trivial stability condition. In this case, a framed representation is called {\it stable} if there is no proper (ordinary) subrepresentation of $V$ which contains the image of $f$. The set of stable framed representations of dimension vector $\gamma$ with framed structure dimension vector $\bf n$ is denoted by $  {\hat M}^{st}_{\gamma,{\bf n}}$. The gauge group $G_{\gamma}$ of $ M_{\gamma,{\bf n}}$ induces a $G_{\gamma}$-action on $\hat M^{st}_{\gamma,{\bf n}}$. The stack of stable framed representations $[\hat M^{st}_{\gamma,{\bf n}}/G_{\gamma}]$ is in fact a smooth projective scheme. We denote it by $\mathcal M^{st}_{\gamma,{\bf n}}$ and call it {\it the smooth model} of quiver $Q$ with dimension $\gamma$ and framed structure $\bf n$.

The pullback-pushforward construction is applied to the cohomology of the scheme of stable framed representations. This construction leads to two representations of COHA for the quiver $Q$ which we describe below.

Fix two dimension vectors $\gamma_1$ and ${\gamma_2}$. Set $\gamma=\gamma_1+\gamma_2$. Consider the scheme consisting of diagrams
\begin{equation}
\mathcal M^{st}_{\gamma_2,\gamma,{\bf n}}:=\{\xymatrix{0\ar[r]{}&E_1\ar[r]&E\ar[r]&E_2\ar[r]&0\\&&\fC^{\bf n}\ar[u]^{f}\ar[ur]_{f_2}&&}\},
\end{equation}
where $E_1\in  M_{\gamma_1}$, $(E,f)\in \mathcal M^{st}_{\gamma,{\bf n}}$, $(E_2,f_2)\in \mathcal M^{st}_{\gamma_2,{\bf n}}$. $f:\fC^{\bf n}\rightarrow E$ and $f_2:\fC^{\bf n}\rightarrow E_2$ are the framed structures attached to $E$ and $E_2$ respectively. The subgroup of the automorphism group of $E$ which preserves the embedding of $E_1$ is denoted by $P_{\gamma_1,\gamma,{\bf n}}$. It plays the role of the automorphism group of $\mathcal M^{st}_{\gamma_2,\gamma,{\bf n}}$. The natural projections from the diagram to its components give the following diagram:
\begin{equation}\label{corres}
  \xymatrix{&\mathcal M^{st}_{\gamma,{\bf n}}&\\&[\hat M^{st}_{\gamma_2,\gamma,{\bf n}}/P_{\gamma_2,\gamma,{\bf n}}]\ar[u]^{p}\ar[dr]^{p_2}\ar[dl]_{p_1}&\\[  M_{\gamma_1}/G_{\gamma_1}]&&\mathcal M^{st}_{\gamma_2,{\bf n}}}.
\end{equation}
The map $p_*(p_1^*(\phi_1)\cup p_2^*(\varphi_2))$ defines a morphism from $H^*({\mathcal M}^{st}_{\gamma_2,{\bf n}})$ to $H^*({\mathcal M}^{st}_{\gamma,{\bf n}})$ for $\phi_1\in \mathcal H_{\gamma_1}$ and $\varphi_2\in H^*(\mathcal M^{st}_{\gamma_2,{\bf n}})$. This morphism induces a representation of $\mathcal H=\bigoplus_{\gamma}\mathcal H_{\gamma}$ on $\bigoplus_{\gamma}H^*(\mathcal M^{st}_{\gamma,{\bf n}})$. It is called {\it the increasing representation} of COHA for the quiver $Q$, and denoted by $R^+_{\bf n}$. Similarly, the map $(p_2)_*(p_1^*(\phi_1)\cup p^*(\varphi))$ for $\phi_1\in \mathcal H_{\gamma_1}$ and $\varphi\in H^*(\mathcal M^{st}_{\gamma,{\bf n}})$ gives {\it the decreasing representation} $R^-_{\bf n}$ on the cohomology of the smooth model. In order to have well-defined representations one needs to show that $p$ and $p_2$ are proper morphisms. For $A_1$-case the properness is obvious (see Section \ref{2repn_A1} below).


\subsection{$A_1$-case} 

 Let $n$ be the framed structure dimension. A framed representation $(\fC^d,f)$ of $A_1$-quiver is stable if and only if $f:\fC^n\rightarrow \fC^d$ is surjective. Thus the stable framed moduli space $\mathcal M^{st}_{d,n}$ is the Grassmannian (of quotient spaces) $Gr(d,n)$ for $0\leq d\leq n$, and empty for $d>n$. 

It is well known (see e.g. \cite{Ful1997}, p.$161$) that the cohomology of full flag variety $Fl(n)$ is isomorphic to $R(n)=\fQ[x_1,\ldots,x_n]/(e_1(x_1,\ldots,x_n),\ldots,e_n(x_1,\ldots,x_n))$, where $e_i(x_1,\ldots,x_n)$ represents the $i$-th elementary symmetric polynomial. The cohomology of Grassmannian $Gr(d,n)$ is a subalgebra of $R(n)$ which is generated by Schur polynomials in variables $x_1,\ldots,x_d$. Thus we can use $s_{\lambda}(x_1,\ldots,x_d)$ to represent classes in $H^*(Gr(d,n))$. There is a natural projection $\pi: Fl(n)\rightarrow Gr(d,n)$. By abuse of notations, we use the same symbol $x_i$ to denote the classes in $Gr(d,n)$ whose pullback $\pi^*(x_i)$ is $x_i\in H^*(Fl(n))$.

Classes in $H^*(Gr(d,n))$ have an alternative presentation.

\begin{lemma}\label{Lemma2.1}
  In $H^*(Gr(d,n))$, $s_{\lambda}(x_1,\ldots,x_d)=(-1)^{|\lambda|}s_{\lambda'}(x_{d+1},\ldots,x_n)$, where $\lambda'$ is the transpose partition of $\lambda$.
\end{lemma}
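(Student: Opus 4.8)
The plan is to reduce the statement to a purely combinatorial identity between Schur polynomials, using the single ``fundamental relation'' that survives in $H^*(Fl(n))$. Recall that in $R(n)=\mathbb{Q}[x_1,\ldots,x_n]/(e_1,\ldots,e_n)$ we have $e_k(x_1,\ldots,x_n)=0$ for all $k\ge1$, which is to say
\[
\prod_{i=1}^{n}(1+x_it)=1\qquad\text{in }R(n)[t].
\]
First I would split the product as $\prod_{i=1}^{d}(1+x_it)\cdot\prod_{j=d+1}^{n}(1+x_jt)=1$ and invert the first factor. Since $\bigl(\sum_{k\ge0}e_k(x_1,\ldots,x_d)\,t^k\bigr)^{-1}=\sum_{k\ge0}(-1)^kh_k(x_1,\ldots,x_d)\,t^k$ as a formal power series ($h_k$ the complete homogeneous symmetric polynomial), comparing coefficients yields the key substitution rule
\[
e_k(x_{d+1},\ldots,x_n)=(-1)^kh_k(x_1,\ldots,x_d)\qquad\text{in }H^*(Gr(d,n)),\quad k\ge0 ,
\]
and in particular $h_k(x_1,\ldots,x_d)=0$ for $k>n-d$, which is exactly what makes a polynomial (the left side has only $n-d+1$ nonzero terms) equal to an a priori infinite series.

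Next I would feed this into the two Jacobi--Trudi formulas. Writing $\ell=\ell(\lambda)$, the classical and dual Jacobi--Trudi identities give
\[
s_\lambda(x_1,\ldots,x_d)=\det\bigl(h_{\lambda_i-i+j}(x_1,\ldots,x_d)\bigr)_{1\le i,j\le\ell},\qquad s_{\lambda'}(x_{d+1},\ldots,x_n)=\det\bigl(e_{\lambda_i-i+j}(x_{d+1},\ldots,x_n)\bigr)_{1\le i,j\le\ell},
\]
the second one because $(\lambda')'=\lambda$ and $\lambda'_1=\ell$. Substituting the rule above into the second determinant turns its $(i,j)$ entry into $(-1)^{\lambda_i-i+j}h_{\lambda_i-i+j}(x_1,\ldots,x_d)$; pulling the factor $(-1)^{\lambda_i-i}$ out of row $i$ and $(-1)^{j}$ out of column $j$ produces the scalar $(-1)^{\sum_{i=1}^{\ell}(\lambda_i-i)+\sum_{j=1}^{\ell}j}$ times the first determinant. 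Since $\sum_{i=1}^{\ell}i=\sum_{j=1}^{\ell}j$, this scalar collapses to $(-1)^{\sum_i\lambda_i}=(-1)^{|\lambda|}$, so $s_{\lambda'}(x_{d+1},\ldots,x_n)=(-1)^{|\lambda|}s_\lambda(x_1,\ldots,x_d)$; multiplying through by $(-1)^{|\lambda|}$ gives the claim.

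I expect no deep obstacle: the two genuinely load-bearing inputs are (i) the fundamental relation $\prod_{i=1}^{n}(1+x_it)=1$, which is just the defining relations of $R(n)$, and (ii) Jacobi--Trudi, which is standard and valid in any number of variables. The points that need care are the sign bookkeeping in the determinant and the justification that $e_k(x_{d+1},\ldots,x_n)=(-1)^kh_k(x_1,\ldots,x_d)$ is a genuine identity of cohomology classes rather than a formal manipulation --- i.e., that $h_k(x_1,\ldots,x_d)=0$ for $k>n-d$ really holds in $H^*(Gr(d,n))$, which is forced by (i). One should also note that the identity is intended for $\lambda$ inside the $d\times(n-d)$ rectangle, so that both sides represent honest Schubert classes; outside that rectangle both $s_\lambda(x_1,\ldots,x_d)$ and $s_{\lambda'}(x_{d+1},\ldots,x_n)$ vanish in $H^*(Gr(d,n))$ and the identity is trivially true. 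An alternative, more geometric route would phrase (i) via the total Chern classes of the tautological sub- and quotient bundles on $Gr(d,n)$ and $s_{\lambda'}$ via the determinantal (Giambelli) formula for the dual Schubert class, but the symmetric-function computation above is the most economical.
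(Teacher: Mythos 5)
Your proposal is correct and is essentially the paper's own argument: both rest on the relation $\prod_{i=1}^{d}(1-x_it)^{-1}=\prod_{i=d+1}^{n}(1-x_it)$ in $R(n)[t]$ (equivalently $e_k(x_{d+1},\ldots,x_n)=(-1)^k h_k(x_1,\ldots,x_d)$), followed by the (dual) Jacobi--Trudi determinant and extraction of the sign $(-1)^{|\lambda|}$. The only differences are cosmetic --- you derive the key relation from $e_k(x_1,\ldots,x_n)=0$ and pull the signs out of rows and columns, while the paper cites the relation from Fulton and gets the sign via a $t$-homogeneity argument.
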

\begin{proof}
 The above identity can be easily deduced from the identity $\prod_{i=1}^d\frac{1}{1-x_it}=\prod_{i=d+1}^n(1-x_it)$ (see e.g. \cite{Ful1997}, p.$163$) in the ring $R(n)[t]$.

 Since
 \begin{equation}
   \prod_{i=1}^d\frac{1}{1-x_it}=\sum_{r\geq0}h_r(x_1,\ldots,x_d)t^r
 \end{equation}
 and
  \begin{equation}
   \prod_{i=d+1}^n(1-x_it)=\sum_{r\geq0}e_r(x_{d+1},\ldots,x_n)(-t)^r
 \end{equation}
 where $h_r$ (resp. $e_r$) stands for the $r$-th complete symmetric polynomial (resp. elementary symmetric polynomial), we have
\begin{equation}
  h_r(x_1,\ldots,x_d)=(-1)^re_r(x_{d+1},\ldots,x_n), \quad r\geq0.
\end{equation}

By Jacobi-Trudi identity,
\begin{eqnarray*}
  s_{\lambda'}(x_1,\ldots,x_d)&=&det(e_{\lambda_i-i+j}(x_1,\ldots,x_d))\\
  &=&det((-1)^{\lambda_i-i+j}h_{\lambda_i-i+j}(x_{d+1},\ldots,x_n))\\
  &=&(-1)^{|\lambda|}det(h_{\lambda_i-i+j}(x_{d+1},\ldots,x_n))\\
  &=&(-1)^{|\lambda|}s_{\lambda}(x_{d+1},\ldots,x_n).
\end{eqnarray*}
The third identity comes from the fact that
\begin{eqnarray*}
 det(h_{\lambda_i-i+j}t^{\lambda_i-i+j})&=&\sum_{\omega}\sum_{i=1}^n(-1)^{\omega}h_{\lambda_i-i+\omega(i)}t^{\lambda_i-i+\omega(i)}\\
 &=&\sum_{\omega}t^{\sum_{i=1}^n\lambda_i-i+\omega(i)}\sum_{i=1}^n(-1)^{\omega}h_{\lambda_i-i+\omega(i)}\\
 &=&\sum_{\omega}t^{|\lambda|}\sum_{i=1}^n(-1)^{\omega}h_{\lambda_i-i+\omega(i)}\\
 &=&t^{|\lambda|}det(h_{\lambda_i-i+j}).
\end{eqnarray*}

\end{proof}

In the following we will use this ``transpose'' presentation to do some computations.

\subsection{Two representations of $A_1$-COHA}\label{2repn_A1}

 The scheme $[M^{st}_{d_2,d,n}/P_{d_2,d,n}]$ in $A_1$-quiver case is isomorphic as a scheme to the two-step flag variety $F_{d_2,d,n}$, which is variety of the flags $\{\fC^n\twoheadrightarrow \fC^d\twoheadrightarrow \fC^{d_2}\}$. Let $\phi_i$ be a generator of $\mathcal H_1$, and $s_{\lambda}$ be the Schur polynomial considered as an element of the cohomology of the Grassmannian $H^*(Gr(d_2,n))$ whose partition is $\lambda$. In this case, $p$ is the obvious projection from $F_{d_2,d,n}$ to $Gr(d,n)$ and $p_2$ is the obvious projection from $F_{d_2,d,n}$ to $Gr(d_2,n)$. Therefore both $p$ and $p_2$ are proper morphisms of stacks (which are in fact schemes), and the increasing and decreasing representations introduced in Section \ref{section23} are well defined. 

 Now we want to compute the increasing representation by the formula $p_*(p_1^*(\phi_i)\cup p_2^*(s_{\lambda}))$. Note that in this case, $d_1=1$. 
 Recall that $\phi_i$ represents the polynomial $\phi_i(x_{1,1})=x_{1,1}^i$. 
 Using the geometric interpretation, $x^i_{1,1}$ is treated as the first Chern class of the tautological line bundle $\mathscr O(-i)$ over the classifying space of $G_1$. $\mathscr O(-i)$ will be pulled back through $p_1$ to the line bundle over $F_{d_2,d,n}$ associated to the corresponding character of $G_{d_1}$ when treating $G_{d_1}$ as a subquotient of $P_{d_2,d,n}$. Hence $p_1^*(\phi_i)$ will be the first Chern class of the line bundle described above, which is $\phi_i(x_{d_2+1})=x^i_{d_2+1}$.


  As homogenous spaces, $Gr(d,n)\approx GL_n(\fC)/P_{d,n}$, $Gr(d_2,n)\approx GL_n(\fC)/P_{d_2,n}$ and $F(d_2,d,n)\approx GL_n(\fC)/P_{d_2,d,n}$. We use the formula in \cite{Bri1996} to compute the pushforward.

\begin{thm}\label{thm2.5}\cite{Bri1996}. 
Let $G$ be a connected reductive algebraic group over $\fC$ and $B$ a Borel subgroup. Choose a maximal torus $T\subset B$ with Weyl group $W$. The set of all positive roots of the root system of $(G,T)$ is denoted by $\Delta^+$. Let $P\supset B$ be a parabolic subgroup of $G$, with the set of positive roots $\Delta^+(P)$ and Weyl group $W_P$. Let $L_{\alpha}$ be the complex line bundle over $G/B$ which is associated to the root $\alpha$. The Gysin homomorphism $f_*:H^*(G/B)\rightarrow H^*(G/P)$ is given by
\begin{equation}
  f_*(p)=\sum_{w\in W/W_P}w\cdot\frac{p}{\prod_{\alpha\in\Delta^+\backslash\Delta^+(P)}c_1(L_{\alpha})}.
\end{equation}
\end{thm}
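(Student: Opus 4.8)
The plan is to prove the formula by equivariant localization with respect to the maximal torus $T$, in the spirit of \cite{Bri1996}. First I would lift the Gysin map $f_*$ to $T$-equivariant cohomology, where it admits a description purely in terms of $T$-fixed-point data, and only at the very end specialize the equivariant parameters back to ordinary cohomology; there $c_1(L_\alpha)$ is identified with $\alpha$ under the Borel presentation of $H^*(G/B)$, and the $W$-action in the statement is the induced action on this presentation.

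The first step is to assemble the fixed-point geometry on the target. The $T$-fixed points of $G/P$ are indexed by the cosets $W/W_P$, the coset of $w$ giving the point $wP$. At this point the tangent space to $G/P$ has $T$-weights indexed by $w(\Delta^+\setminus\Delta^+(P))$, so its equivariant Euler class is, up to sign, $\prod_{\alpha\in\Delta^+\setminus\Delta^+(P)}w\cdot c_1(L_\alpha)$; moreover, under the Borel presentation the restriction of a class $p$ to the fixed point $wP$ is obtained by applying $w$ to $p$. These two identifications are exactly what make the individual summands $w\cdot\frac{p}{\prod_{\alpha\in\Delta^+\setminus\Delta^+(P)}c_1(L_\alpha)}$ appear. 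Because $W_P$ permutes the nilradical roots $\Delta^+\setminus\Delta^+(P)$, the product in the denominator is $W_P$-invariant; establishing that the whole summand descends to a function of the coset $wW_P$ is the first point I would verify, and it is what legitimizes the summation over $W/W_P$.

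The second step is to feed this into the Atiyah--Bott--Berline--Vergne localization formula, which expresses the pushforward as a sum over the $T$-fixed points of $G/P$, each contributing the localized class divided by the equivariant Euler class of the tangent space at that point; combined with the identifications above this yields precisely $\sum_{w\in W/W_P}w\cdot\frac{p}{\prod_{\alpha\in\Delta^+\setminus\Delta^+(P)}c_1(L_\alpha)}$. The hard part will be to show that this a priori rational expression is a genuine cohomology class, that is, that after symmetrization the poles along the hyperplanes $c_1(L_\alpha)=0$ cancel and the sum lands in $H^*(G/P)\subset H^*(G/B)$. I would handle this by the standard clearing-of-denominators argument of the divided-difference calculus: choosing a reduced decomposition and reducing to the rank-one identity $\frac{q-s_\alpha q}{c_1(L_\alpha)}\in H^*$ for a simple reflection $s_\alpha$, and then checking that the cohomological degrees match. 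Once polynomiality and independence of the coset representatives are in hand, the localization identity determines $f_*(p)$ uniquely, which completes the proof.
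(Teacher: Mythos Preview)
The paper does not prove this theorem at all: it is quoted verbatim from \cite{Bri1996} and used as a black box to derive the explicit formulas (\ref{incre-formula}) and (\ref{decre-formula-1}). So there is no ``paper's own proof'' to compare your proposal against.

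That said, your outline is essentially the argument Brion gives in the cited reference: pass to $T$-equivariant cohomology, apply the Atiyah--Bott/Berline--Vergne localization formula at the $T$-fixed points $wP\in G/P$, identify the equivariant Euler class of the tangent space at $wP$ with $\prod_{\alpha\in\Delta^+\setminus\Delta^+(P)}w(\alpha)$, and then observe that the resulting rational expression is in fact polynomial. Your plan to verify $W_P$-invariance of each summand and to clear denominators via divided-difference operators is the standard way to complete the argument. One small point to be careful with: the localization formula for a proper pushforward $f_*$ between two spaces with $T$-action (rather than a pushforward to a point) requires tracking the Euler classes of \emph{both} source and target fibers, or equivalently uses the factorization through $G/B\to\mathrm{pt}$; make sure your write-up handles this cleanly rather than only invoking the ``integral over $G/B$'' version of localization.
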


Applying Thm \ref{thm2.5}, for $s_{\lambda}\in H^{*}(Gr(d_2,n))$,
\begin{equation}\label{incre-formula}
  (\phi_i^+\cdot s_{\lambda})(x_1,\ldots,x_{d_2+1})=\sum_{i_1<\ldots<i_{d_2}}\frac{s_{\lambda}(x_{i_1},\ldots,x_{i_{d_2}})\phi_i(x_{i_{d_2+1}})}{\prod_{j=1}^{d_2}(x_{i_j}-x_{i_{d_2+1}})}.
\end{equation}

Similarly, the formula of the decreasing actions is
\begin{equation}\label{decre-formula-1}
  (\phi_i^-\cdot s_{\lambda})(x_1,\ldots,x_{d_2-1})=\sum_{i_1<\ldots<i_{d_2}}\frac{s_{\lambda}(x_{i_1},\ldots,x_{i_{d_2}})\phi_i(x_{i_{d_2}})}{\prod_{j=d_2+1}^n(x_{i_{d_2}}-x_{j})}.
\end{equation}

\begin{remark}
In Formula (\ref{decre-formula-1}), variables $x_i$ for $i>d_2-1$ appear on the right side, which do not belong to the variables on the left side. This is not a contradiction because of the formula $s_{\lambda}(x_1,\ldots,x_d)=(-1)^{|\lambda|}s_{\lambda'}(x_{d+1},\ldots,x_n)$ by Lemma \ref{Lemma2.1}. More details will be discussed in the following section.
\end{remark}

\begin{remark}
The construction above actually only defines an incrasing operator $\phi^+_{i,d}$ from $H^*(Gr(d,n))$ to $H^*(Gr(d+1,n))$ and an decreasing operator $\phi^-_{i,d}$ from $H^*(Gr(d,n))$ to $H^*(Gr(d-1,n))$. The increasing operator we need is $\phi^+_i=\sum_{d=0}^n\phi^+_{i,d}$. The decreasing operator we need is $\phi^-_i=\sum_{d=0}^n\phi^-_{i,d}$. We can then define {\it the twisted decreasing operator} by $\hat{\phi}^-_i=\sum_{d=0}^n(-1)^{d-1}\phi^-_{i,d}$. We call the representation formed by these operators {\it the twisted decreasing representation} and denote it by $\hat{R}^-_n$.
\end{remark}



\section{Increasing and decreasing operators}

\subsection{Increasing operators}

The key result of this subsection is adapted from \cite{Fra2013}.
\begin{prop}\cite{Fra2013}.
 The increasing representation structure is induced by the open embedding $j: \hat M^{st}_{d,n}\rightarrow \hat M_{d,n}$. The induced map $j^*:\mathcal H\rightarrow R^+_n$ is $\mathcal H$-linear and surjective. The kernel of $j^*$ equals $\sum_{p\geq0, q>0}\mathcal H_p\wedge(e_q^n\cup \mathcal H_q)$, where $e_q=\prod_{i=1}^dx_i$.
\end{prop}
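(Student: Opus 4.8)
The plan is to treat the three claims of the proposition in turn. First, for the statement that the increasing representation $R^+_n$ is induced by the open embedding $j\colon \hat M^{st}_{d,n}\hookrightarrow \hat M_{d,n}$, I would go back to the correspondence diagram (\ref{corres}) and observe that, in the $A_1$-case, $\hat M_{d,n}=\Hom(\fC^n,\fC^d)$ with its $GL_d(\fC)$-action, so $[\hat M_{d,n}/G_d]$ has the same equivariant cohomology as $M_d$ — that is, $H^*([\hat M_{d,n}/G_d])\cong\mathcal H_d$. The open subset $\hat M^{st}_{d,n}$ of surjective maps has quotient the honest scheme $Gr(d,n)$, and $j^*\colon\mathcal H_d\to H^*(Gr(d,n))$ is the standard restriction map. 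I would then check that the COHA product on $\mathcal H$ and the $R^+_n$-action are intertwined by $j^*$: both are given by pullback–pushforward along the same Hecke-type correspondences, and the open embeddings $j$ for the various dimension vectors are compatible with the maps $p,p_1,p_2$, so base change gives the $\mathcal H$-linearity of $j^*$ (this is exactly the argument of \cite{Fra2013}, transported to the framed $A_1$ setting).

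Second, for surjectivity of $j^*$: since $\hat M^{st}_{d,n}$ is open in the affine (hence ``nice'') space $\hat M_{d,n}$, the equivariant restriction map $H^*_{G_d}(\hat M_{d,n})\to H^*_{G_d}(\hat M^{st}_{d,n})$ is surjective. Concretely, $\mathcal H_d=\fQ[x_1,\dots,x_d]^{S_d}$ surjects onto $H^*(Gr(d,n))=\fQ[x_1,\dots,x_d]^{S_d}/I$ for the relation ideal $I$, which handles each graded piece; linearity over $\mathcal H$ then propagates this to the whole representation.

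Third — and this is where the real work lies — I must identify $\ker(j^*)$ with $\sum_{p\ge 0,\ q>0}\mathcal H_p\wedge(e_q^n\cup\mathcal H_q)$, where $e_q=x_1\cdots x_q$. The inclusion ``$\supseteq$'' should be the easier direction: I would show that $e_d^n$, i.e. $(x_1\cdots x_d)^n$, restricts to $0$ in $H^*(Gr(d,n))$ — this follows from the transpose presentation in Lemma \ref{Lemma2.1}, since $(x_1\cdots x_d)^n = s_{(n^d)}(x_1,\dots,x_d)=(-1)^{dn}s_{(d^n)}(x_{d+1},\dots,x_n)$ and the partition $(d^n)$ has more than $n-d$ rows, so the corresponding Schur function in the $n-d$ variables $x_{d+1},\dots,x_n$ vanishes — and then use $\mathcal H$-linearity of $j^*$ to conclude that $\mathcal H_p\wedge(e_q^n\cup\mathcal H_q)$ is killed for every $p\ge 0,q>0$. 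For the reverse inclusion ``$\subseteq$'', the strategy is to compare dimensions/Hilbert series degree by degree: the quotient $\mathcal H\big/\sum_{p,q}\mathcal H_p\wedge(e_q^n\cup\mathcal H_q)$ should, in the dimension-$d$ piece, have the same graded dimension as $H^*(Gr(d,n))$, namely the Gaussian binomial coefficient $\binom{n}{d}_q$. I would make this precise by giving an explicit spanning set of $\mathcal H_d$ modulo the proposed kernel: the monomials $\Phi_{\bf k}=\phi_{k_1}\wedge\cdots\wedge\phi_{k_d}$ with $0\le k_1<\cdots<k_d\le n-1$, i.e. Schur polynomials $s_\lambda$ with $\lambda$ fitting in a $d\times(n-d)$ box, form a basis of $H^*(Gr(d,n))$; it remains to show that any $\Phi_{\bf k}$ with $k_d\ge n$ lies in the proposed kernel. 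Writing such a $\Phi_{\bf k}$ with $k_d\ge n$, the top generator $\phi_{k_d}$ can be expressed, via the identity $\phi_m = x_1^m$ paired with the relation $x_1^n\equiv$ (lower terms, involving $e_q^n$) — more precisely using $h_n(x_1,\dots,x_d)=(-1)^n e_n(x_{d+1},\dots,x_n)$ and its consequences — as an element of $e_q^n\cup\mathcal H_q$ plus terms of lower $k_d$, and an induction on $k_d$ (or on $|\lambda|$) closes the argument.

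The main obstacle I anticipate is precisely this last combinatorial step: showing that the ideal $\sum_{p,q}\mathcal H_p\wedge(e_q^n\cup\mathcal H_q)$ is not merely contained in $\ker(j^*)$ but exhausts it. The subtlety is bookkeeping — the wedge/exterior structure means one must be careful about which ``slot'' the factor $e_q^n$ occupies and how reordering interacts with the Schur-polynomial dictionary $\Phi_{{\bf k}(\lambda)}=s_{\lambda({\bf k})}$. I would organize this via the straightening/Pieri rules for Schur functions together with the transpose identity of Lemma \ref{Lemma2.1}, reducing everything to the single vanishing $s_{(n^d)}(x_1,\dots,x_d)=0$ in $H^*(Gr(d,n))$ and its $\mathcal H$-translates; a clean way to package it is to exhibit a filtration of $\mathcal H_d$ whose associated graded is spanned by the box-partition Schur functions together with explicit representatives lying in the claimed kernel, and then match Hilbert series with $\binom{n}{d}_q=\dim_q H^*(Gr(d,n))$.
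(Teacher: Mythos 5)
Your route is genuinely different from the paper's: the paper disposes of this proposition in one line, by citing \cite{Fra2013} (where the analogous statement is proved for $n=1$) and asserting that the argument generalizes easily to arbitrary framing $n$ for the $A_1$-quiver. You instead propose a direct, self-contained verification: identify $H^*_{G_d}(\hat M_{d,n})\cong\mathcal H_d$ (since $\hat M_{d,n}=\Hom(\fC^n,\fC^d)$ is equivariantly contractible), identify the stable quotient with $Gr(d,n)$, get $\mathcal H$-linearity from compatibility of the correspondences with the open embeddings, and then compute the kernel by hand. That is a legitimate and arguably more informative approach (it makes the kernel description transparent rather than imported), at the cost of redoing work that the cited reference already contains; the paper's approach buys brevity but hides exactly the combinatorial content you are forced to confront.

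Within your plan, the one step whose stated mechanism would not work as written is the hard inclusion $\ker(j^*)\subseteq\sum_{p,q}\mathcal H_p\wedge(e_q^n\cup\mathcal H_q)$. You propose to rewrite $\phi_{k_d}$ with $k_d\geq n$ using $h_n(x_1,\ldots,x_d)=(-1)^ne_n(x_{d+1},\ldots,x_n)$ ``and its consequences''; but that relation lives in the target ring $H^*(Gr(d,n))$ (equivalently in $R(n)$), not in the source $\mathcal H_d=\fQ[x_1,\ldots,x_d]^{S_d}$, so it cannot be used to manipulate classes of $\mathcal H_d$ modulo the proposed ideal. The correct and much simpler device is the purely algebraic shift identity $e_q^n\cup\Phi_{\bf k}=\Phi_{{\bf k}+n}$ (this is exactly the lemma the paper states right after the proposition): given a monomial $\Phi_{\bf k}\in\mathcal H_d$ with at least one index $\geq n$, collect the $q>0$ indices that are $\geq n$ into $\Phi_{{\bf m}+n}=e_q^n\cup\Phi_{\bf m}$ and the remaining indices into $\Phi_{\bf l}\in\mathcal H_p$; up to sign $\Phi_{\bf k}=\Phi_{\bf l}\wedge(e_q^n\cup\Phi_{\bf m})$, so it lies in the ideal. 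Combined with your (correct) observations that the $\Phi_{\bf k}$ with $0\leq k_1<\cdots<k_d\leq n-1$ map to the Schubert basis of $H^*(Gr(d,n))$, and that $e_q^n\mapsto s_{(n^q)}=0$ by Lemma \ref{Lemma2.1} (using also that $j^*$ is a ring map for $\cup$ on each graded piece, not just $\mathcal H$-linear), this closes the argument without any induction or Hilbert-series bookkeeping. Your reliance on \cite{Fra2013} for the base-change/linearity step is reasonable, since the proposition is attributed to that paper anyway.
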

\begin{proof}
 In \cite{Fra2013}, the similar result for $n=1$ is proved. It can be easily generalized to $n>1$ case for $A_1$-quiver.
\end{proof}

The next lemma follows immediately from the definition of Schur polynomials.
\begin{lemma}
  $s_{(\lambda_d+1,\lambda_{d-1}+1,\ldots,\lambda_1+1)}=e_ds_{\lambda}$ for $s_{\lambda}\in \fQ[x_1,\ldots,x_d]^{S_d}$ and $e_d=\prod_{i=1}^dx_i$. Thus $e^n_d\cup \Phi_{\bf k}=\Phi_{\bf k+n}$ for $\Phi_{\bf k}\in \mathcal H_d$, and ${\bf n}=(n,n,\ldots,n)$.
\end{lemma}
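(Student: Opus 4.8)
The plan is to reduce the first identity to the bialternant formula for Schur polynomials and then to extract a common factor from a determinant. Write a partition $\lambda$ with at most $d$ parts in weakly decreasing form $\lambda_1 \ge \cdots \ge \lambda_d \ge 0$ (so the statement's listing $(\lambda_d+1,\dots,\lambda_1+1)$ records the partition $\lambda + 1^d$, with $1^d=(1,\dots,1)$, in the paper's increasing convention). Recall that $s_\lambda(x_1,\dots,x_d) = a_{\lambda+\delta}/a_\delta$, where $\delta = (d-1,d-2,\dots,1,0)$ and $a_\mu = \det\bigl(x_i^{\mu_j}\bigr)_{1 \le i,j \le d}$. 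First I would note that passing from $\lambda$ to $\lambda + 1^d$ adds $1$ to every exponent in the matrix $(x_i^{\lambda_j + d - j})$, which is the same as multiplying the $i$-th row of $a_{\lambda+\delta}$ by $x_i$; hence $a_{(\lambda+1^d)+\delta} = \bigl(\prod_{i=1}^d x_i\bigr)\,a_{\lambda+\delta} = e_d\, a_{\lambda+\delta}$, while the Vandermonde denominator $a_\delta$ is untouched. Dividing gives $s_{\lambda+1^d} = e_d s_\lambda$, which is the first claim. (A combinatorial variant is equally quick: a semistandard Young tableau of shape $\lambda+1^d$ in the alphabet $\{1,\dots,d\}$ is forced to have its first column equal to $1,2,\dots,d$ read top to bottom, and deleting that column gives a weight-preserving bijection onto tableaux of shape $\lambda$ after dividing the weight monomial by $x_1\cdots x_d = e_d$.)

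Second, for the consequence I would simply iterate: applying $s_{\mu + 1^d} = e_d s_\mu$ a total of $n$ times yields $s_{\lambda + n^d} = e_d^n\, s_\lambda$ for every partition $\lambda$ with at most $d$ parts, where $n^d = (n,\dots,n)$. It then remains to match indices with the $\Phi$-notation of Section~\ref{incre_basis}. For $\mathbf k = (k_1 < \cdots < k_d)$ with $k_1 \ge 0$ we have $\lambda(\mathbf k) = (k_d - d + 1,\, k_{d-1} - d + 2,\, \dots,\, k_1)$, so the shifted index $\mathbf k + n := (k_1+n,\dots,k_d+n)$, which again satisfies $0 \le k_1+n < \cdots < k_d + n$, has associated partition $\lambda(\mathbf k + n) = \bigl((k_d+n)-d+1,\dots,k_1+n\bigr) = \lambda(\mathbf k) + n^d$. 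Using $\Phi_{\mathbf k} = s_{\lambda(\mathbf k)}$ and the fact that the cup product $\cup$ on $\mathcal H_d$ is ordinary multiplication of symmetric polynomials, I would conclude $e_d^n \cup \Phi_{\mathbf k} = e_d^n\, s_{\lambda(\mathbf k)} = s_{\lambda(\mathbf k)+n^d} = s_{\lambda(\mathbf k + n)} = \Phi_{\mathbf k + n}$.

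I do not expect a genuine obstacle here; the only point requiring care is the bookkeeping between the weakly-decreasing partition convention and the increasing index $\mathbf k$, together with checking that $\lambda(\mathbf k + n)$ still has exactly $d$ parts (allowing a final part equal to $0$), so that ``adding $1$ to each of the $d$ parts'' on the partition side corresponds exactly to ``translating $\mathbf k$ by $n$'' on the index side.
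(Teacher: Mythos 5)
Your proposal is correct: the paper offers no argument at all for this lemma (it simply declares it ``follows immediately from the definition of Schur polynomials''), and your bialternant computation $a_{(\lambda+1^d)+\delta}=e_d\,a_{\lambda+\delta}$ together with the iteration to $e_d^n s_\lambda = s_{\lambda+n^d}$ and the index bookkeeping $\lambda(\mathbf{k}+n)=\lambda(\mathbf{k})+n^d$ is precisely the standard way of making that immediacy explicit. Nothing is missing; the tableau bijection you mention is a fine alternative but not needed.
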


Finally, we come to the result, whose proof is straightforward.

\begin{prop}\label{propincrebasis}
The increasing representation $R^+_n$ is a quotient of $\mathcal H=\bigwedge^*(\mathcal H_1)$ whose kernel is the subalgebra generated by $\{\phi_i\}_{i\geq n}$. Thus $R_n^+$ is isomorphic to $\bigwedge^*(V(n))$ where $V(n)$ is the linear space spanned by $\phi_0, \ldots, \phi_{n-1}$ and the action is given by wedge product from left. Then $\{\phi_{k_1}\wedge\ldots\wedge\phi_{k_d}\}_{k_1<\ldots<k_d}, 0\leq d\leq n-1$ form a basis of $R^+_n$.
\end{prop}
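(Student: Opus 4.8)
The plan is to obtain the statement as a purely algebraic consequence of the two preceding results. By the Proposition of \cite{Fra2013}, the $\mathcal H$-linear surjection $j^*$ identifies $R^+_n$ with $\mathcal H/K$, where $\mathcal H=\bigwedge^*(\mathcal H_1)$ acts on itself by left wedge multiplication and $K:=\sum_{p\geq0,\,q>0}\mathcal H_p\wedge(e_q^n\cup\mathcal H_q)$. So everything reduces to identifying $K$ in the monomial basis $\{\Phi_{\bf k}\}$ of $\mathcal H$ and reading off a basis of the quotient.

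First I would pin down $e_q^n\cup\mathcal H_q$. Since $\mathcal H_q$ has basis $\{\Phi_{\bf k}\}$ indexed by ${\bf k}=(k_1<\cdots<k_q)$ with $k_1\geq0$, the preceding Lemma (which gives $e_q^n\cup\Phi_{\bf k}=\Phi_{{\bf k}+n}$, where ${\bf k}+n$ adds $n$ to each entry) shows that $e_q^n\cup\mathcal H_q$ is exactly the span of those $\Phi_{\bf m}$ all of whose entries are $\geq n$. Wedging on the left by a basis vector $\Phi_{\bf l}$ of $\mathcal H_p$ gives $\pm\Phi_{{\bf l}\cup{\bf m}}$ when ${\bf l}\cap{\bf m}=\varnothing$ and $0$ otherwise, so each summand $\mathcal H_p\wedge(e_q^n\cup\mathcal H_q)$ is a span of monomials $\Phi_{\bf k}$ having at least one entry $\geq n$. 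Conversely any such $\Phi_{\bf k}$ arises (take $q=1$, ${\bf m}=(k_j)$ for one index $k_j\geq n$, and ${\bf l}$ the complement). Hence $K=\mathrm{span}\{\Phi_{\bf k}:k_j\geq n\text{ for some }j\}$, which is exactly the two-sided ideal of $\mathcal H=\bigwedge^*(\mathcal H_1)$ generated by $\{\phi_i\}_{i\geq n}$ (the ``subalgebra'' of the statement): each $\phi_i$, $i\geq n$, lies in $K$, and every $\Phi_{\bf k}$ with $k_j\geq n$ equals $\pm\,\phi_{k_j}$ wedged with the remaining generators.

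It then remains to read off the basis. As $\{\Phi_{\bf k}\}$ over all admissible indices is a $\fQ$-basis of $\mathcal H$ and $K$ is the coordinate subspace spanned by the $\Phi_{\bf k}$ with an entry $\geq n$, the images in $R^+_n=\mathcal H/K$ of the monomials $\phi_{k_1}\wedge\cdots\wedge\phi_{k_d}$ with $0\leq k_1<\cdots<k_d\leq n-1$ (and thus $0\leq d\leq n$) form a $\fQ$-basis; their span with the induced product is the exterior algebra $\bigwedge^*(V(n))$ on $V(n)=\langle\phi_0,\ldots,\phi_{n-1}\rangle$, and since $K$ is the ideal generated by $\{\phi_i\}_{i\geq n}$ the quotient map is the algebra epimorphism $\bigwedge^*(\mathcal H_1)\twoheadrightarrow\bigwedge^*(V(n))$ killing $\phi_i$ for $i\geq n$, with $\mathcal H$ still acting by left wedge. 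This is the asserted isomorphism, and as a check the graded pieces match: $H^*(Gr(d,n))$ and $\bigwedge^d V(n)$ both have dimension $\binom{n}{d}$. I do not expect a genuine obstacle here; the only subtlety is that $e_q^n\cup\mathcal H_q$ is the proper ``$n$-shifted'' subspace of $\mathcal H_q$ rather than all of $\mathcal H_q$ — so the summands of $K$ must be read through the preceding Lemma — and that the hypothesis $q>0$ is precisely what forces every element of $K$ to involve some generator of index $\geq n$.
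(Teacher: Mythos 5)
Your argument is correct and is exactly the "straightforward" proof the paper leaves implicit: combine the Franzen proposition ($\ker j^*=\sum_{p\geq0,q>0}\mathcal H_p\wedge(e_q^n\cup\mathcal H_q)$) with the shift lemma $e_q^n\cup\Phi_{\bf k}=\Phi_{{\bf k}+n}$ to identify the kernel with the span of monomials having an index $\geq n$, i.e.\ the ideal generated by $\{\phi_i\}_{i\geq n}$, and read off the monomial basis of $\bigwedge^*(V(n))$. Your parenthetical correction that the degrees run over $0\leq d\leq n$ (with $0\leq k_1<\cdots<k_d\leq n-1$), matching $\dim H^*(Gr(d,n))=\binom{n}{d}$ including $d=n$, is also right and fixes a small slip in the statement.
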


\subsubsection{Two presentations of classes in the cohomology of Grassmannian}\label{dual_1}

Proposition \ref{propincrebasis} implies that we can use the notations introduced in section \ref{incre_basis} to represent cohomology classes of Grassmannians, as well as those in COHA, since they share the same product structure. Thus in $H^*(Gr(d,n))$, $\Phi_{\bf k}=\phi_{k_1}\wedge\ldots\wedge\phi_{k_d}(x_1,\ldots,x_d)$ with index ${\bf k}=(k_1,\ldots,k_d)$ can represent the Schur polynomial $s_{\lambda({\bf k})}(x_{1,d},\ldots,x_{d,d})$, where $0\leq k_1<\ldots<k_d\leq n-1$ and $\lambda=(\lambda_d,\ldots,\lambda_1)=(k_d-d+1,k_{d-1}-d+2,\ldots,k_1)$ is a partition of length $\leq n$. 

Let $\lambda'$ be the transpose partition of $\lambda$, and ${\bf k'}={\bf k}(\lambda')$. By Lemma \ref{Lemma2.1}, $\Phi_{\bf k}(x_1,\ldots,x_d)=(-1)^{|\lambda|}\Phi_{\bf k'}(x_{d+1},\ldots,x_n)$. $\Phi_{\bf k}$ is called {\it the ordinary presentation} of the correspondent class $s_{\lambda}$, and $(-1)^{|\lambda|}\Phi_{\bf k'}$ is called {\it the transpose presentation}.

\subsection{Decreasing operators}

Our goal is to understand the decreasing representation using the basis $\{\Phi_{\bf k}\}_{\bf k}$ of $R^+_n$. From Section \ref{dual_1}, the equation (\ref{decre-formula-1}) can be rewritten as
\begin{equation}\label{decre-formula-ture}
\begin{split}
 (\phi_i^-\cdot \Phi_{\bf k})(x_1,\ldots,x_{d_2-1})&=\sum_{i_1<\ldots<i_{d_2}}\frac{\Phi_{\bf k}(x_{i_1},\ldots,x_{i_{d_2}})\phi_i(x_{i_{d_2}})}{\prod_{j=d_2+1}^n(x_{i_{d_2}}-x_{j})}\\
  &=  (-1)^{|\lambda(\bf k)|}\sum_{i_{d_2+1}<\ldots<i_n}\frac{\Phi_{\bf k'}(x_{i_{d_2+1}},\ldots,x_{i_n})\phi_i(x_{i_{d_2}})}{\prod_{j=d_2+1}^n(x_{i_{d_2}}-x_{i_{j}})}\\
 &=(-1)^{|\lambda|+n-d_2}(\phi_i^+\cdot\Phi_{\bf k'})(x_{d_2},\ldots,x_n).
\end{split}
 \end{equation}
This formula suggests an algorithm. Start from an ordinary presentation of a class $ \Phi_{\bf k}= \phi_{k_1}\wedge\ldots\wedge\phi_{k_d}$ in $H^*(Gr(d,n))$, where ${\bf k}=(k_1,\ldots,k_d)$, and $0\leq k_1<\ldots<k_d\leq n-1$. First we change $\Phi_{\bf k}(x_1,\ldots,x_d)$ to $(-1)^{|\lambda(\bf k)|}\Phi_{\bf k'}(x_{d+1},\ldots,x_n)$ by Lemma \ref{Lemma2.1}. Then apply $\phi_i^-$ to $\Phi_{\bf k'}$ using formula (\ref{decre-formula-ture}) and Proposition \ref{propincrebasis}. Finally change the result back to the ordinary presentation.

We need the following lemma to help us to do these transformations.

\begin{lemma}
 If $\phi_r$ appears in $\Phi_{{\bf k}'(\lambda)}$, $\phi_{n-r-1}$ will not appear in $\Phi_{{\bf k}(\lambda)}$. On the other hand, if $\phi_r$ doesn't appear in $\Phi_{{\bf k}'(\lambda)}$, $\phi_{n-r-1}$ will appear in $\Phi_{{\bf k}(\lambda)}$.

\end{lemma}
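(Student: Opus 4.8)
The plan is to discard the geometry entirely and reduce the statement to an elementary fact about the two index sets. First I would record the translation. If $\mu=(\mu_1\geq\mu_2\geq\cdots\geq\mu_d)$ denotes $\lambda$ written in the usual decreasing order, then the relation $\lambda_j=k_j-j+1$ says exactly that, as a \emph{set},
\[
 {\bf k}(\lambda)=\{\,\mu_i+(d-i):1\leq i\leq d\,\}\subseteq\{0,1,\dots,n-1\},
\]
the $d$ numbers being pairwise distinct because $\mu$ is weakly decreasing; similarly ${\bf k}'(\lambda)={\bf k}(\lambda')=\{\,\mu'_l+(d'-l):1\leq l\leq d'\,\}$ with $d'=n-d$ and $\mu'$ the transpose of $\mu$ (padded by zeros). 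Since ``$\phi_r$ appears in $\Phi_{{\bf k}(\lambda)}$'' means precisely $r\in{\bf k}(\lambda)$, and likewise for ${\bf k}'(\lambda)$, both assertions of the lemma follow at once (for $0\leq r\leq n-1$, the only range in which $\phi_r$ can occur) from the single set identity
\[
 {\bf k}'(\lambda)=\{\,n-1-s:\ s\in\{0,\dots,n-1\},\ s\notin{\bf k}(\lambda)\,\},
\]
i.e.\ from the claim that the involution $s\mapsto n-1-s$ of $\{0,\dots,n-1\}$ carries the complement of ${\bf k}(\lambda)$ onto ${\bf k}'(\lambda)$.

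To prove that identity I would argue by counting. The right-hand side has $n-d=d'$ elements, which equals $|{\bf k}'(\lambda)|$, so it is enough to show that the $d$-element sets ${\bf k}(\lambda)$ and $\{\,n-1-r:r\in{\bf k}'(\lambda)\,\}$ are disjoint; then, since their cardinalities sum to $n$, they are complementary. So take $s=\mu_i+(d-i)\in{\bf k}(\lambda)$ and $r=\mu'_l+(d'-l)\in{\bf k}'(\lambda)$; a short substitution using $d'=n-d$ turns the equation $s=n-1-r$ into $\mu_i+\mu'_l=i+l-1$. At this point I would invoke the defining property $\mu'_l=\#\{k:\mu_k\geq l\}$, which yields the equivalence $\mu'_l\geq i\iff\mu_i\geq l$: if $\mu_i\geq l$ then also $\mu'_l\geq i$, so $\mu_i+\mu'_l\geq i+l$; whereas if $\mu_i\leq l-1$ then $\mu'_l\leq i-1$, so $\mu_i+\mu'_l\leq i+l-2$. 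In neither case can the sum equal $i+l-1$, so no such coincidence occurs. This establishes disjointness, hence the displayed identity, hence the lemma.

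I do not expect a genuine obstacle here; the one thing requiring care is the bookkeeping with the paper's index conventions (the partition is listed increasingly as $(\lambda_d,\dots,\lambda_1)$ and encoded through $\lambda_j=k_j-j+1$), so that the set-theoretic identity above is the faithful reformulation of the lemma. One could instead run the whole argument through the ``Maya diagram'' / lattice-path description — ${\bf k}(\lambda)$ is the set of bead positions of the boundary path of $\mu$ inside the $d\times d'$ rectangle, and transposition reflects the rectangle while interchanging beads and holes, which is exactly the displayed identity — but the arithmetic proof above avoids having to set up that correspondence precisely.
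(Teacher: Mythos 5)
Your proof is correct, but it takes a genuinely different route from the paper's. You reformulate the lemma as the single statement that the involution $s\mapsto n-1-s$ of $\{0,\dots,n-1\}$ carries the complement of ${\bf k}(\lambda)$ onto ${\bf k}'(\lambda)$ (the classical complementarity of the beta-sets, or Maya diagrams, of a partition and its transpose in the $d\times(n-d)$ box), and you prove it by a cardinality-plus-disjointness argument: since the two sets have $d$ and $n-d$ elements, it suffices to rule out $\mu_i+(d-i)=n-1-\bigl(\mu'_l+(n-d-l)\bigr)$, i.e.\ $\mu_i+\mu'_l=i+l-1$, which is impossible because the defining equivalence $\mu_i\geq l\iff\mu'_l\geq i$ forces $\mu_i+\mu'_l\geq i+l$ or $\mu_i+\mu'_l\leq i+l-2$. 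The paper instead verifies the same complementarity by direct computation: it writes the parts $\lambda_{d-i+1}$, hence the indices $k_{d-i+1}$, piecewise in terms of $\lambda'$, and then checks interval by interval that the $k_{d-i+1}$ sweep out exactly the integers $n-1-r$ with $r\notin{\bf k}'$, with separate boundary cases for $r<k'_1$ and $r>k'_{n-d}$. Your argument is shorter, symmetric in $\lambda$ and $\lambda'$, and avoids the index bookkeeping where slips are easy; what the paper's explicit piecewise formulas buy is that they are reused almost verbatim in the proof of the subsequent proposition identifying $\phi_r^-$ with the right partial derivative $\partial^R_{n-r-1}$, so the computational route does double duty there. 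The one point of care in your version --- that ${\bf k}'\subseteq\{0,\dots,n-1\}$, that ``$\phi_r$ appears in $\Phi_{\bf k}$'' means $r\in{\bf k}$, and that only $0\leq r\leq n-1$ is relevant --- is handled correctly, so I see no gap.
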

\begin{proof}
From Section \ref{propincrebasis}, $\lambda=(\lambda_d,\ldots,\lambda_1)=(k_d-d+1,k_{d-1}-d+2,\ldots,k_1)$ is a partition of length $\leq n$. The transpose partition is defined by $\lambda'_j=\#\{\lambda_i\geq n-d+1-j\}$ for $1\leq j\leq n-d$. Thus we have
\begin{equation}
  \lambda_{d-i+1}=\begin{cases}
    n-d&\text{if}\ 1\leq i\leq \lambda'_1,\\
     n-d-j&\text{if}\ \lambda'_j+1\leq i\leq \lambda'_{j+1}\ \text{for}\ 1\leq j\leq n-d-1,\\
    0&\text{if}\ \lambda'_{n-d}+1\leq i\leq d.
  \end{cases}
\end{equation}

From $\lambda=(\lambda_d,\ldots,\lambda_1)=(k_d-d+1,k_{d-1}-d+2,\ldots,k_1)$, it immediately implies
\begin{equation}
  k_{d-i+1}
  =\begin{cases}
    n-i&\text{if}\ 1\leq i\leq \lambda'_1,\\
      n-i-j&\text{if}\ \lambda'_j+1\leq i\leq \lambda'_{j+1}\ \text{for}\ 1\leq j\leq n-d-1,\\
      d-i&\text{if}\ \lambda'_{n-d}+1\leq i\leq d.
  \end{cases}
\end{equation}
Then $n-k'_{j+1}= n-j-\lambda'_{j+1}\leq k_{d-i+1}=n-i-j\leq n-j-\lambda'_j-1= n-{k'_j}-2$ if $\lambda'_j+1\leq i\leq \lambda'_{j+1}$ for $1\leq j\leq n-d-1$, or $0=d-d\leq k_{d-i+1}=d-i\leq d-\lambda'_{n-d}-1=n-k'_{n-d}-2$ if $\lambda'_{n-d}+1\leq i\leq d$, or $n-k'_1=n-\lambda'_1\leq k_d=n-i\leq n-1$. Therefore $k_{d-i+1}$ would run over all integers between $n-k'_{j+1}$ and $n-k'_j-2$, or between $0$ and $n-k'_{n-d}-2$, or between $n-k'_1$ and $n-1$. If $\phi_r$ doesn't appear in $\Phi_{{\bf k'}(\lambda)}$, there are three cases. If $k'_s< r< k'_{s+1}$ for $1\leq s\leq n-d-1$, $n-k'_{s+1}\leq n-r-1\leq n-k'_s-2$. If $k'_{n-d}<r\leq d$, $0\leq n-r-1\leq n-k'_{n-d}-2$. If $0\leq r<k'_1$, $n-k'_1\leq n-r-1\leq n-1$. This means that there exists some $1\leq i\leq d$ such that $k_{d-i+1}=n-r-1$.

If $\phi_r$ appear in $\Phi_{{\bf k}'(\lambda)}=\phi_{k'_1}\wedge\ldots\wedge\phi_{k'_{n-d}}$, let $r=k'_s$. Then $k_{d-i+1}$ can never be $n-k'_s-1=n-r-1$ for $1\leq i\leq d$.

\end{proof}

\begin{defn}
We introduce the {\it right partial derivative operator} $\partial_i^R: \bigwedge^*(V(n))\rightarrow\bigwedge^*(V(n))$ to state the following proposition. For $\Phi_{\bf k}=\phi_{k_1}\wedge\ldots\wedge\phi_{k_d}$, if $\phi_{i}$ appears in $\Phi_{\bf k}$, $\partial_{i}^R(\Phi_{\bf k})=(-1)^{d-i}\phi_{k_1}\wedge\ldots\wedge\hat{\phi}_{i}\wedge\ldots\wedge\phi_{k_d}$. If $\phi_i$ does not appear in $\Phi_{\bf k}$, $\partial_{i}^R(\Phi_{\bf k})=0$. The {\it left partial derivative operator} $\partial_i^L:\bigwedge^*(V(n))\rightarrow\bigwedge^*(V(n))$ is defined in the similar way. If $\phi_{i}$ appears in $\Phi_{\bf k}$, $\partial_{i}^L(\Phi_{\bf k})=(-1)^{i-1}\phi_{k_1}\wedge\ldots\wedge\hat{\phi}_{i}\wedge\ldots\wedge\phi_{k_d}$. If $\phi_i$ does not appear in $\Phi_{\bf k}$, $\partial_{i}^L(\Phi_{\bf k})=0$. It is easy to see that $\partial_i^{R}=(-1)^{d-1}\partial^L_i$ on $\bigwedge^d(V(n))$.
\end{defn}


\begin{prop}
  The decreasing operators are the right partial derivative operators on $\bigwedge ^*(V(n))$: $\phi_r^-\cdot\Phi_{\bf k}=\partial_{n-r-1}^R(\Phi_{\bf k})$.
\end{prop}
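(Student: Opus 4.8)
The plan is to combine the ``transpose trick'' already encoded in formula (\ref{decre-formula-ture}) with the explicit description of the increasing operators as left wedge multiplication from Proposition \ref{propincrebasis}. The idea is that the decreasing operator $\phi_r^-$ on $H^*(Gr(d,n))$ has been rewritten, via Lemma \ref{Lemma2.1}, as (up to sign) the increasing operator $\phi_i^+$ acting in the complementary variables $x_{d+1},\ldots,x_n$; and by Proposition \ref{propincrebasis} that increasing operator is wedging $\phi_i$ on the left of $\Phi_{\bf k'}$. So the whole computation reduces to: wedge $\phi_i$ onto $\Phi_{\bf k'}$ (getting either $0$ or a new monomial up to sign), then translate back to the ordinary presentation via Lemma \ref{Lemma2.1}, and identify the result with $\partial^R_{n-r-1}(\Phi_{\bf k})$.

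First I would fix $\Phi_{\bf k}=\phi_{k_1}\wedge\cdots\wedge\phi_{k_d}$ in $H^*(Gr(d,n))$ with associated partition $\lambda$, pass to the transpose presentation $(-1)^{|\lambda|}\Phi_{\bf k'}(x_{d+1},\ldots,x_n)$, and then apply formula (\ref{decre-formula-ture}) to see that $\phi_r^-\cdot\Phi_{\bf k}$ equals $(-1)^{|\lambda|+n-d_2}(\phi_i^+\cdot\Phi_{\bf k'})$ in the complementary variables, where in Section \ref{2repn_A1} the exponent $i$ entering $\phi^+_i$ is produced from $\phi_i^-$ via the pullback computation; I would first pin down the precise relation between the $r$ in $\phi_r^-$ and the $i$ in the resulting $\phi_i^+$ (tracking the tautological line bundle on the two-step flag variety and the character through which $G_{d_1}$ sits inside $P_{d_2,d,n}$), which is where the index $n-r-1$ is going to come from. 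Second, I would split into the two cases governed by the Lemma preceding this Proposition: if $\phi_{n-r-1}$ already appears in $\Phi_{\bf k}$, then by that Lemma $\phi_i$ ($i$ being $k'_s$ for some $s$) already appears in $\Phi_{\bf k'}$, so $\phi_i\wedge\Phi_{\bf k'}=0$ and both sides vanish; if $\phi_{n-r-1}$ does not appear in $\Phi_{\bf k}$, then $\phi_i$ does not appear in $\Phi_{\bf k'}$, the wedge $\phi_i\wedge\Phi_{\bf k'}$ is a nonzero monomial, and I must re-sort it into increasing order and then apply Lemma \ref{Lemma2.1} once more to return to the ordinary presentation on $Gr(d-1,n)$.

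The heart of the argument is the third step: a careful bookkeeping of signs. There are four sign contributions to reconcile --- the $(-1)^{|\lambda|}$ from passing to the transpose presentation of $\Phi_{\bf k}$ on $Gr(d,n)$, the $(-1)^{n-d_2}$ (here $d_2=d-1$) in formula (\ref{decre-formula-ture}), the transposition sign from moving $\phi_i$ past the $\phi_{k'_t}$ with $k'_t<i$ to put $\phi_i\wedge\Phi_{\bf k'}$ into increasing order, and the $(-1)^{|\mu|}$ from Lemma \ref{Lemma2.1} applied to the result on $Gr(d-1,n)$, where $\mu$ is the partition of the new class. Using the explicit formulas for $k_{d-i+1}$ and $\lambda_{d-i+1}$ in terms of $\lambda'$ from the proof of the preceding Lemma, I would compute $|\mu|-|\lambda|$ and the reordering exponent directly and check that everything collapses to the single sign $(-1)^{(d-1)-(n-r-1)}=(-1)^{d-i}$ demanded by the definition of $\partial^R_{n-r-1}$ on $\bigwedge^{d}(V(n))$ --- equivalently, that removing $\phi_{n-r-1}$ from its position in $\Phi_{\bf k}$ matches, up to this sign, inserting $\phi_i$ into $\Phi_{\bf k'}$ under the duality $r\mapsto n-r-1$.

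I expect this sign reconciliation to be the main obstacle: each of the four signs depends on the position of the inserted/deleted letter relative to the partition data, and one has to verify that the complementation $k\leftrightarrow n-k-1$ between the $\bf k$-indexing and the $\bf k'$-indexing converts ``position counted from the right'' on one side into ``position counted from the left'' on the other, which is exactly the discrepancy $\partial^R=(-1)^{d-1}\partial^L$ recorded in the Definition. Once the two cases and the sign are settled, the identity $\phi_r^-\cdot\Phi_{\bf k}=\partial^R_{n-r-1}(\Phi_{\bf k})$ follows, and since the $\Phi_{\bf k}$ form a basis of $R^+_n$ by Proposition \ref{propincrebasis}, this determines the decreasing operators completely.
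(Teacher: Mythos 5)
Your overall strategy coincides with the paper's: pass to the transpose presentation via Lemma \ref{Lemma2.1}, use (\ref{decre-formula-ture}) together with Proposition \ref{propincrebasis} to turn $\phi_r^-$ into a left wedge by $\phi_r$ on $\Phi_{\bf k'}$ in the complementary variables, convert back, and reconcile signs. However, your case analysis inverts the complementation lemma, and this is a genuine error rather than a cosmetic slip. The lemma preceding the proposition says that $\phi_r$ appears in $\Phi_{\bf k'}$ exactly when $\phi_{n-r-1}$ does \emph{not} appear in $\Phi_{\bf k}$ (the index sets $\{k_1,\dots,k_d\}$ and $\{n-1-k'_1,\dots,n-1-k'_{n-d}\}$ are complementary in $\{0,\dots,n-1\}$). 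Hence: when $\phi_{n-r-1}$ occurs in $\Phi_{\bf k}$, the wedge $\phi_r\wedge\Phi_{\bf k'}$ is \emph{nonzero} and this is the case requiring all the partition bookkeeping; when $\phi_{n-r-1}$ does not occur, $\phi_r$ is already a factor of $\Phi_{\bf k'}$, the wedge vanishes, and this matches $\partial^R_{n-r-1}(\Phi_{\bf k})=0$. You assert the opposite in both cases, and in your ``vanishing'' case you claim ``both sides vanish'' even though $\partial^R_{n-r-1}(\Phi_{\bf k})$ is by definition nonzero precisely when $\phi_{n-r-1}$ occurs in $\Phi_{\bf k}$; carried out literally, your plan would be trying to verify an identity that fails on every basis vector.

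A secondary confusion: you propose to obtain the index $n-r-1$ by pinning down ``the relation between the $r$ in $\phi_r^-$ and the $i$ in the resulting $\phi_i^+$'' through the line-bundle/pullback computation of Section \ref{2repn_A1}. In (\ref{decre-formula-ture}) the index does not change: $\phi_r^-$ acting on $\Phi_{\bf k}$ becomes, up to the sign $(-1)^{|\lambda|+n-d_2}$, the operator $\phi_r^+$ (same $r$) acting on $\Phi_{\bf k'}$. The shift $r\mapsto n-r-1$ is purely combinatorial: inserting $\phi_r$ into ${\bf k'}$ corresponds, after transposing back, to deleting $\phi_{n-r-1}$ from $\bf k$ --- exactly the content of the preceding lemma and of the paper's explicit recovery of the new partition $\mu$ from $\mu'$ (showing $\mu_i=\lambda_{i+1}+1$ on one range of indices and $\mu_i=\lambda_i$ on the rest, hence $l_i=k_{i+1}$ or $k_i$, with total sign $(-1)^{n-d+s+|\lambda|+|\mu|}=(-1)^{r+s}$). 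That computation is the heart of the proof and is precisely what you defer as ``I would compute and check''; even after correcting the two cases, that bookkeeping still has to be carried out to establish the claimed sign.
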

\begin{proof}
 What we want is to compute $\phi^-_r\cdot \Phi_{\bf k}$. Based on formula (\ref{decre-formula-ture}), we have
 \begin{equation}
\begin{split}
  (\phi^-_r\cdot \Phi_{\bf k})(x_1,\ldots,x_{d-1})&=(-1)^{|\lambda|+n-d}(\phi_r^+\cdot\Phi_{\bf k'})(x_{d},\ldots,x_n)\\
  &=(-1)^{|\lambda|+n-d}(\phi_r\wedge\phi_{k'_1}\wedge\ldots\wedge\phi_{k'_{n-{d}}})(x_{d},\ldots,x_n).
\end{split}
 \end{equation}

If $\phi_{n-r-1}$ is not in the $\Phi_{\bf k}$, $\phi_r$ will appear in $\Phi_{\bf k'}$. Thus $\phi_r^-\cdot\Phi_{\bf k}(x_1,\ldots,x_{d-1})=(\phi_r\wedge\phi_{k'_1}\wedge\ldots\wedge\phi_r\wedge\ldots\wedge\phi_{k'_{n-d}})(x_{d},\ldots,x_{n})=0$.

If $\phi_{n-r-1}$ appears in $\Phi_{\bf k}=\phi_{k_1}\wedge\ldots\wedge\phi_{k_d}$, $\phi_r$ won't be in $\Phi_{\bf k'}=\phi_{k'_1}\wedge\ldots\wedge\phi_{k'_{n-d}}$. Assume $k'_s< r<k'_{s+1}$. We have
\begin{equation}
\phi_r\wedge\phi_{k'_1}\wedge\ldots\wedge\phi_{k'_{n-{d}}}=(-1)^{s}\phi_{k'_1}\wedge\ldots\wedge\phi_{k'_s}\wedge\phi_r\wedge\phi_{k'_{s+1}}\wedge\ldots\wedge\phi_{k'_{n-{d}}}.
\end{equation}
We have to change this back to the ordinary presentation. First, let's find the partition associated to this polynomial. The index ${\bf l'}=(l'_{1},\ldots,l'_{n-d+1})$ is given by
\begin{equation}
  l'_i=\begin{cases}
    k'_{i-1}&s+2\leq i\leq n-d+1,\\
     r&i=s+1,\\
    k'_i&1\leq i\leq s.
  \end{cases}
  \end{equation}
Then the new partition $\mu'=(\mu'_{n-d+1},\ldots,\mu'_1)$ is given by
\begin{equation}
  \mu'_i=\begin{cases}
     \lambda'_{i-1}-1&s+2\leq i\leq n-d+1,\\
     r-s&i=s+1,\\
    \lambda'_i&1\leq i\leq s.
  \end{cases}
\end{equation}

Next step is to recover the partition $\mu$ from its transpose $\mu'$. From the definition of transpose partition, $\mu'_j=\#\{\mu_i\geq n-d+2-j\}$ for $1\leq j\leq n-d-1$. Then

   \begin{equation}
     \mu_{d-i}=\begin{cases}
   n-d-j  &\text{if}\ \lambda'_{j}\leq i\leq\lambda'_{j+1}-1\ \text{and}\ s+1\leq j\leq n-d-1,\\
    n-d-s &\text{if}\ r-s+1\leq i\leq \lambda'_{s+1}-1,\\
    n-d+1-s &\text{if}\ \lambda'_s+1\leq i\leq r-s,\\
    n-d+1-j&\text{if}\ \lambda'_j+1\leq i\leq \lambda'_{j+1}\ \text{and}\ 2\leq j\leq s-1,\\
    n-d+1&\text{if}\ 1\leq i\leq \lambda'_1.
     \end{cases}
   \end{equation}

By comparing it with

\begin{equation}
  \lambda_{d-i+1}=\begin{cases}
    n-d-j&\text{if}\ \lambda'_j+1\leq i\leq \lambda'_{j+1}\ \text{and}\ 2\leq j\leq n-d-1,\\
    n-d&\text{if}\ 1\leq i\leq \lambda'_1,
  \end{cases}
\end{equation}
we notice that $\mu_{i}=\lambda_{i+1}+1$ for $d-r+s\leq i\leq d-1$ and $\mu_{i}=\lambda_i$ for $1\leq i\leq d-r+s-1$.

Therefore, since $l_i=\mu_i+i-1$ for $1\leq i\leq d-1$ and $k_j=\lambda_j+j-1$ for $1\leq j\leq d$, it is easy to see that $l_i=k_{i+1}$ for $d-r+s\leq i\leq d-1$ and $l_{i}=k_i$ for $1\leq i\leq d-r+s-1$. Thus the resulted presentation is $(-1)^{n-d+s+|\lambda|+|\mu|}\phi_{k_1}\wedge\ldots\wedge\hat{\phi}_{n-r-1}\wedge\ldots\wedge\phi_{k_d}$$=$$(-1)^{r+s}\phi_{k_1}\wedge\ldots\wedge\hat{\phi}_{n-r-1}\wedge\ldots\wedge\phi_{k_d}$$=$$\partial_{n-r-1}^R(\Phi_{\bf k})$, which is $\Phi_{\bf k}$ applied by the right partial derivative of $\phi_{n-r-1}$. If $r<k'_1$ or $r>k'_{n-d}$, the similar process will lead to the same result.
\end{proof}


\subsection{Twisted decreasing operators}
From the above computations, it is obvious to have the following proposition about the twisted decreasing operators.

\begin{prop}
The twisted decreasing operators are the left partial derivative operators on  $\bigwedge ^*(V(n))$: $\hat{\phi}_r^-\cdot\Phi_{\bf k}=\partial_{n-r-1}^L(\Phi_{\bf k})$.
\end{prop}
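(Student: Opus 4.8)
The plan is to obtain this as an immediate corollary of the preceding proposition, the definition of the twisted operator, and the sign identity $\partial_i^R=(-1)^{d-1}\partial_i^L$ on $\bigwedge^d(V(n))$ recorded in the definition; no further geometry or shuffle computation is needed.

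Concretely, I would fix a basis element $\Phi_{\bf k}\in\bigwedge^d(V(n))=H^*(Gr(d,n))$ and proceed in three short steps. First, unwind the definition $\hat{\phi}^-_r=\sum_{d'=0}^n(-1)^{d'-1}\phi^-_{r,d'}$: since each $\phi^-_{r,d'}$ carries $H^*(Gr(d',n))$ to $H^*(Gr(d'-1,n))$ and therefore annihilates $\bigwedge^d(V(n))$ unless $d'=d$, only the $d'=d$ summand contributes, so $\hat{\phi}^-_r\cdot\Phi_{\bf k}=(-1)^{d-1}\,\phi^-_r\cdot\Phi_{\bf k}$. Second, apply the previous proposition, $\phi^-_r\cdot\Phi_{\bf k}=\partial^R_{n-r-1}(\Phi_{\bf k})$. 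Third, rewrite $\partial^R_{n-r-1}$ in terms of $\partial^L_{n-r-1}$ by the sign identity at input degree $d$, i.e. $\partial^R_{n-r-1}(\Phi_{\bf k})=(-1)^{d-1}\partial^L_{n-r-1}(\Phi_{\bf k})$. Multiplying the two factors of $(-1)^{d-1}$ gives $\hat{\phi}^-_r\cdot\Phi_{\bf k}=\partial^L_{n-r-1}(\Phi_{\bf k})$, and the identity extends to all of $\bigwedge^*(V(n))$ by linearity.

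There is no genuine obstacle here; the only care needed is the degree bookkeeping — the identity $\partial^R=(-1)^{d-1}\partial^L$ must be invoked at the degree $d$ of the argument $\Phi_{\bf k}$ (the input of the decreasing operator), not at $d-1$ — together with the observation that the twist sign $(-1)^{d-1}$ and the $\partial^R\to\partial^L$ conversion sign $(-1)^{d-1}$ cancel. If one preferred to avoid citing the sign identity entirely, an equally mechanical alternative would be to re-run the presentation-change computation from the proof of the previous proposition, inserting the factor $(-1)^{d-1}$ from the outset; its only effect is to replace the final sign $(-1)^{r+s}$ obtained there (which came from the $(-1)^{d-i}$ in the definition of $\partial^R$, with $i=n-r-1$) by the sign $(-1)^{i-1}$ defining $\partial^L$. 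Either route is pure sign arithmetic.
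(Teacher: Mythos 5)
Your argument is correct and matches the paper's intent exactly: the paper's proof simply declares the statement obvious from the preceding computations, and your three steps (only the $d'=d$ summand of the twist survives, the previous proposition $\phi_r^-\cdot\Phi_{\bf k}=\partial^R_{n-r-1}(\Phi_{\bf k})$, and the sign identity $\partial^R_i=(-1)^{d-1}\partial^L_i$ on $\bigwedge^d(V(n))$ with the two factors of $(-1)^{d-1}$ cancelling) are precisely the details being elided. Your remark about invoking the sign identity at the input degree $d$ is the right bookkeeping point, so nothing further is needed.
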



\section{The double of representations}

\subsection{The double of untwisted representations}

Let $V(n)$ be the $n$-dimensional vector space spanned by $\{\phi_i \}_{i=0}^{n-1}$. The increasing and decreasing representations can be realized as creation operators $\{\alpha_i^+\}_{i=0}^{n-1}$ and annihilation operators $\{\alpha_i^-\}_{i=0}^{n-1}$ on $\bigwedge^*(V(n))$. Here $\alpha_i^+=\phi_i^+$ is the left wedge product, and $\alpha_i^-=\phi_{n-i-1}^-$ is the right partial derivative $\partial_i^R$.

Define $H=[\alpha_0^+,\alpha_0^-]$ and the following operators for $0\leq i\leq n-1$:
\begin{equation*}
  T_i=\frac{\alpha_i^++[H,\alpha_i^+]/2}{2},\quad  S_i=\frac{\alpha_i^--[H,\alpha_i^-]/2}{2}.
\end{equation*}

Then define the following operators
\begin{equation*}
\begin{split}
E_{0}&=-\frac{\alpha_0^-+[H,\alpha_0^-]/2}{2}, \quad F_0=\frac{\alpha_0^+-[H,\alpha_0^+]/2}{2},\\
E_1&=S_0,\quad F_1=T_0,\\
  E_i&=[T_{i-2},S_{i-1}],\quad F_i=[T_{i-1},S_{i-2}], \quad \text{for}\ \ 2\leq i\leq n,\\
  H_i&=[E_i,F_i],\quad \text{for}\ \ 0\leq i\leq n.
\end{split}
  \end{equation*}

  In the following, let $P_k$ be an arbitrary degree $k$ monomial in $\bigwedge^*(V(n))$. Denote by $R_i^j$ the operator which change the factor $\phi_i$ in $P_k$ to $\phi_j$.

\begin{lemma}\label{want-to-prove-matrix}
 For $2\leq i\leq n$,
  \begin{enumerate}
    \item $H(P_k)=(-1)^{k-1}P_k$.
    \item $E_0(P_k)=-\partial_0^R (P_k)$ if $k$ is even, and $\phi_0$ is included in $P_k$. Otherwise it's 0.

   \item $F_0(P_k)=\phi_0\wedge P_k$ if $k$ is odd, and $\phi_0$ is NOT included in $P_k$. Otherwise it's 0.

    \item $E_1(P_k)=\partial_0^R (P_k)$ if $k$ is odd, and $\phi_0$ is included in $P_k$. Otherwise it's 0.

   \item  $F_1(P_k)=\phi_0\wedge P_k$ if $k$ is even, and $\phi_0$ is NOT included in $P_k$. Otherwise it's 0.
     \item $S_{i-1}(P_k)=\partial_{i-1}^R (P_k)$  if $k$ is odd, and $\phi_{i-1}$ is included in $P_k$. Otherwise it's 0.

    \item $T_{i-1}(P_k)=\phi_{i-1}\wedge P_k$ if $k$ is even, and $\phi_{i-1}$ is NOT included in $P_k$. Otherwise it's 0.

    \item $E_i(P_k)=R_{i-1}^{i-2}(P_k)$ if $\phi_{i-1}$ is included in $P_k$ and $\phi_{i-2}$ is NOT. Otherwise it's 0.

   \item $F_i(P_k)=R_{i-2}^{i-1}(P_k)$ if $\phi_{i-2}$ is included in $P_k$ and $\phi_{i-1}$ is NOT. Otherwise it's 0.
    \item $H_0(P_k)=\begin{cases}
      -P_k \quad &\text{$k$ is even and $\phi_0$ is included in $P_k$}\\
      P_k &\text{$k$ is odd and $\phi_0$ is NOT included in $P_k$}\\
      0&otherwise
    \end{cases}$.

 \item   $H_1(P_k)=\begin{cases}
      P_k \quad &\text{$k$ is even and $\phi_0$ is NOT included in $P_k$}\\
      -P_k &\text{$k$ is odd and $\phi_0$ is included in $P_k$}\\
      0&otherwise
    \end{cases}$.

   \item $H_i(P_k)=\begin{cases}
      -P_k \quad &\text{$\phi_{i-1}$ is included in $P_k$ and $\phi_{i-2}$ is NOT included}\\
      P_k &\text{$\phi_{i-2}$ is included in $P_k$ and $\phi_{i-1}$ is NOT included}\\
      0&otherwise
    \end{cases}$.
  \end{enumerate}
\end{lemma}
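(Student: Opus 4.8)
The plan is to compute every operator in Lemma \ref{want-to-prove-matrix} by unwinding the definitions in terms of the two elementary operations on $\bigwedge^*(V(n))$: left wedge $\alpha_i^+ = \phi_i \wedge -$ and right partial derivative $\alpha_i^- = \partial_i^R$. The single most useful preliminary fact, which I would establish first as item (1), is that $H = [\alpha_0^+,\alpha_0^-]$ acts on a homogeneous monomial $P_k$ of degree $k$ as $(-1)^{k-1}$; this follows because $\alpha_0^+\alpha_0^-$ and $\alpha_0^-\alpha_0^+$ are complementary projections (onto the monomials containing $\phi_0$, resp. not containing $\phi_0$) up to the sign bookkeeping of $\partial_0^R = (-1)^{d-1}\partial_0^L$, and on degree-$k$ monomials the signs combine to give the scalar $(-1)^{k-1}$ on \emph{all} of $\bigwedge^k(V(n))$, not just on monomials. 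Once $H$ is a scalar $\pm 1$ on each graded piece, the commutators $[H,\alpha_i^+]$ and $[H,\alpha_i^-]$ become completely explicit: $[H,\alpha_i^+]$ maps a degree-$k$ monomial to $((-1)^k - (-1)^{k-1})\,\phi_i\wedge P_k = 2(-1)^k\,\phi_i\wedge P_k$, and similarly $[H,\alpha_i^-]$ maps $P_k$ to $2(-1)^{k-1}\partial_i^R(P_k)$, where of course the wedge is $0$ if $\phi_i$ already occurs and the derivative is $0$ if it does not.

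With that in hand, items (2)--(7) are just the observation that $T_i = \tfrac12(\alpha_i^+ + \tfrac12[H,\alpha_i^+])$ and $S_i = \tfrac12(\alpha_i^- - \tfrac12[H,\alpha_i^-])$ are parity-projected versions of $\alpha_i^\pm$. Concretely, substituting the formulas above, $T_i$ acts on $P_k$ as $\tfrac12(1 + (-1)^k)\,\phi_i\wedge P_k$, which is $\phi_i \wedge P_k$ when $k$ is even (and $\phi_i\notin P_k$) and $0$ when $k$ is odd; dually $S_i$ acts as $\tfrac12(1 - (-1)^{k-1})\partial_i^R(P_k) = \tfrac12(1+(-1)^k)\partial_i^R(P_k)$, giving $\partial_i^R(P_k)$ when $k$ is \emph{odd} (note the degree shift: $\partial_i^R$ lowers degree, so the parity condition to state is on the input) and $\phi_i\in P_k$, else $0$. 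The operators $E_0, F_0, E_1=S_0, F_1=T_0$ are then the $i=0$ instances together with the sign and parity-swap built into the definitions of $E_0,F_0$; reading off cases (2)--(5) is immediate, and (6),(7) are (2)--(5) with the index $0$ replaced by $i-1$ and no extra sign.

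For items (8)--(9), the key point is the composition $E_i = [T_{i-2}, S_{i-1}] = T_{i-2}S_{i-1} - S_{i-1}T_{i-2}$. Apply this to a monomial $P_k$ and use the parity gates just derived: $S_{i-1}$ needs odd degree and $\phi_{i-1}$ present, after which the output $\partial_{i-1}^R(P_k)$ has even degree, so $T_{i-2}$ fires exactly when $\phi_{i-2}$ is absent; the other order $S_{i-1}T_{i-2}$ requires $P_k$ even for $T_{i-2}$, then the result has odd degree and $\phi_{i-1}$ present. One of the two orders survives on a given $P_k$ (they are supported on opposite parities), and on the surviving term the signs of $\partial_{i-1}^R$ followed by $\phi_{i-2}\wedge-$ must be checked to reconstitute exactly $R_{i-1}^{i-2}$, i.e. ``delete $\phi_{i-1}$, insert $\phi_{i-2}$ in its sorted place, with coefficient $+1$''. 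This sign verification is the one genuinely fiddly step: it requires tracking the position of $\phi_{i-1}$ in the sorted tuple, the position at which $\phi_{i-2}$ re-enters (which is the same slot since $i-1$ and $i-2$ are adjacent and $\phi_{i-2}$ is absent), and confirming the two $(-1)^{\text{position}}$ factors from $\partial^R$ and from sorting-in cancel. I expect this bookkeeping — making sure the normalization constants $\tfrac14$ from $E_i = \tfrac14(\cdots)$ and all the $(-1)$'s conspire to give a clean $R_{i-1}^{i-2}$ with no stray sign or factor — to be the main obstacle; everything else is mechanical. Finally (10)--(12) follow by composing the formulas from (2)--(9): $H_i = [E_i,F_i] = E_iF_i - F_iE_i$, and since $E_i$ and $F_i$ are supported on complementary configurations (``$\phi_{i-1}$ in, $\phi_{i-2}$ out'' versus the reverse) the commutator is diagonal, $E_iF_i$ contributing $+P_k$ on one configuration and $-F_iE_i$ contributing $-P_k$ on the other, which is precisely the stated three-case formula; the cases (10),(11) for $H_0,H_1$ are the same computation with the parity-of-$k$ condition in place of the $\phi_{i-2}$ condition, and the sign flip between (10) and (11) comes from the sign in the definition of $E_0$ versus $E_1 = S_0$.
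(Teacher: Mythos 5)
Your route is exactly the direct verification the paper dismisses as ``straightforward'': show $H$ acts as the scalar $(-1)^{k-1}$ on $\bigwedge^k(V(n))$, deduce that $T_i,S_i$ (and $E_0,F_0,E_1=S_0,F_1=T_0$) are parity-gated wedge/derivative operators, then evaluate the commutators defining $E_i,F_i,H_i$ case by case. All of your stated conclusions agree with the lemma, so the approach is sound; but two computational points in your sketch need repair before it is a proof.

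First, a sign slip: since $\partial_i^R(P_k)$ has degree $k-1$, one gets $[H,\alpha_i^-](P_k)=\bigl((-1)^{k-2}-(-1)^{k-1}\bigr)\partial_i^R(P_k)=2(-1)^{k}\partial_i^R(P_k)$, not $2(-1)^{k-1}\partial_i^R(P_k)$. With the correct sign, $S_i(P_k)=\tfrac{1-(-1)^{k}}{2}\,\partial_i^R(P_k)$, supported on odd $k$ exactly as you (and the lemma) assert; your intermediate expression $\tfrac12(1+(-1)^k)\partial_i^R(P_k)$ would instead be supported on even $k$, and the parenthetical about a ``degree shift'' is unnecessary once the commutator is computed consistently in terms of the input degree. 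Second, in items (8)--(9) the two signs do not literally cancel: deleting $\phi_{i-1}$ from position $s$ of a degree-$k$ monomial contributes $(-1)^{k-s}$ and re-inserting $\phi_{i-2}$ into the same slot contributes $(-1)^{s-1}$, so the product is $(-1)^{k-1}$ (and in the opposite ordering, together with the minus sign from the commutator, $(-1)^{k}$); these equal $+1$ precisely because of the parity gate imposed by $S_{i-1}$, resp.\ $T_{i-2}$, so the parity hypothesis is genuinely used at this step. (Also, once $T$ and $S$ are shown to act with coefficient $0$ or $1$, no residual factor $\tfrac14$ survives in $E_i=[T_{i-2},S_{i-1}]$.) With these corrections the case-by-case evaluation goes through and yields (1)--(12), including the $H_i$ formulas by composing the complementary supports of $E_i$ and $F_i$ as you describe.
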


\begin{proof}
   The proof of the lemma is straightforward.
\end{proof}

The main theorem below implies that the combination of two representations $R^+_n$ and $R^-_n$ of $A_1$-COHA forms an $D_{n+1}$-Lie algebra.

\begin{thm}
  The above operators satisfy the Serre relations for $0\leq i, j\leq n$:
  \begin{enumerate}
    \item $[H_i,H_j]=0$,
    \item $[E_i,F_j]=\delta_{ij}H_i$,
    \item $[H_i,E_j]=a_{ji}E_j,\quad [H_i,F_j]=-a_{ji}F_j$,
    \item $(adE_i)^{-a_{ji}+1}(E_j)=0$, if $i\neq j$,
    \item $(adF_i)^{-a_{ji}+1}(F_j)=0$, if $i\neq j$,
  \end{enumerate}
  where $(a_{ij})$ is the Cartan matrix for $D_{n+1}$-Lie algebras.
\end{thm}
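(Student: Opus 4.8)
The strategy is to verify each of the five Serre relation families directly by applying both sides to an arbitrary basis monomial $P_k\in\bigwedge^*(V(n))$ and using the explicit action formulas catalogued in Lemma~\ref{want-to-prove-matrix}. Since $\{\Phi_{\bf k}\}$ is a basis of $\bigwedge^*(V(n))$ and every operator in sight ($E_i,F_i,H_i$, and the auxiliary $T_i,S_i,H$) acts on such a monomial either by zero, by $\pm P_k$, by a creation $\phi_a\wedge(-)$, by an annihilation $\partial_a^R$, or by a ``relabeling'' $R_a^b$, all brackets reduce to finite case checks governed by which of the factors $\phi_0,\phi_1,\dots$ occur in $P_k$ and by the parity of $k$. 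The guiding principle is that $E_i,F_i,H_i$ for $i\ge 2$ behave like the standard Chevalley generators of a type-$A$ chain acting on the ``occupation pattern'' of the $\phi_j$'s (the operator $E_i$ hops a particle from slot $i-1$ to slot $i-2$, $F_i$ hops it back, $H_i$ measures the difference of occupations), while the fork at the $D$-end, governed by $E_0,F_0,E_1,F_1,H_0,H_1$, is the place where the parity conditions on $k$ intervene and produce the branching.

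First I would dispose of (1): since each $H_i$ acts diagonally on the monomial basis (by Lemma~\ref{want-to-prove-matrix}(10)--(12)), the $H_i$ mutually commute, so $[H_i,H_j]=0$ is immediate. Next I would handle (2): compute $E_iF_j$ and $F_jE_i$ on $P_k$ from the explicit formulas. For $i\ne j$ the supports of the ``allowed'' configurations are disjoint enough that both orders give the same monomial back, so $[E_i,F_j]=0$; for $i=j$ one gets exactly the diagonal operator recorded as $H_i$ in parts (10)--(12) — here the even/odd bookkeeping at $i=0,1$ must be checked against the definitions $E_0,F_0,E_1,F_1$ in terms of $H$, $\alpha_0^\pm$. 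Then (3): because $H_i$ is diagonal with eigenvalue depending only on the occupation pattern, conjugating $E_j$ (which changes that pattern in one prescribed slot or pair of slots) multiplies it by the difference of the relevant $H_i$-eigenvalues, and one reads off that this difference is precisely the Cartan integer $a_{ji}$ of $D_{n+1}$ with the standard fork labeling ($0$ and $1$ the two short tines attached to node $2$, then the $A_{n-1}$ tail $2,3,\dots,n$); similarly for $F_j$ with sign reversed. Finally (4)--(5): the Serre relations. Adjacent generators fail to commute, and $(\operatorname{ad}E_i)^2(E_j)=0$ must be verified when $a_{ji}=-1$ (and $[E_i,E_j]=0$ when $a_{ji}=0$); this again comes down to: a double hop in the same direction tries to move a particle through a slot that is either already empty-then-occupied or occupied-then-empty in a way that forces a repeated wedge factor, hence zero. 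The non-adjacent commutativity $[E_i,E_j]=0$ for $|i-j|\ge 2$ (and for the two tines $0,1$, which are non-adjacent in $D_{n+1}$) is clear since they act on disjoint slots.

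I expect the main obstacle to be the $D$-type fork, i.e. getting all the signs and parity conditions consistent among $E_0,F_0,E_1,F_1,H_0,H_1$ and their brackets with $T_0,S_0,T_1,S_1$. The definitions route these generators through $H=[\alpha_0^+,\alpha_0^-]$, whose eigenvalue $(-1)^{k-1}$ flips with the total degree; consequently the ``projectors'' $\tfrac{1\pm[H,-]/2}{2}$ select even- or odd-degree monomials, and one must check that $E_0$ and $E_1$ together reconstruct the full annihilation $\alpha_0^-$ (one on even $k$, one on odd $k$) with the correct relative signs so that $[H_0,E_1]=0=[H_1,E_0]$ (the two tines are orthogonal in the $D$-diagram) while $[H_0,E_2]$ and $[H_1,E_2]$ both equal $-E_2$ — matching the column of the $D_{n+1}$ Cartan matrix for node $2$. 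Once the sign conventions at the fork are pinned down, the $A_{n-1}$ tail $E_i=R_{i-1}^{i-2}$, $F_i=R_{i-2}^{i-1}$ is the familiar $\mathfrak{gl}$-type computation and goes through mechanically. I would therefore organize the write-up as: (a) the diagonal statement and (1); (b) the tail relations among $E_i,F_i,H_i$ for $i\ge 2$; (c) the fork relations and the cross-terms linking the fork to node $2$; (d) the Serre relations, checking only the adjacent pairs.
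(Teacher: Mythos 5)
Your proposal takes essentially the same route as the paper: both reduce all five families of relations to a finite case check on basis monomials via the explicit action formulas of Lemma~\ref{want-to-prove-matrix} (diagonality of the $H_i$ for (1), the definition $H_i=[E_i,F_i]$ for the $i=j$ case of (2), and then the list of vanishing brackets and Cartan-integer checks at the fork $0,1,2$ and along the tail $2,\dots,n$). Your outline is simply a more organized version of the verification the paper dismisses as ``easily solved'' by that lemma, so there is no gap to report.
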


\begin{proof}
The first statement holds since each $H_i$ is diagonal by Lemma \ref{want-to-prove-matrix}. The second is due to the definition of $H_i$ for $\delta_{ij}=1$. For the other relations, we need to check the following relations, which can be easily solved by Lemma \ref{want-to-prove-matrix}:
  \begin{enumerate}
  \item $a_{ii}=2$, for $0\leq i\leq n$,
  \item $a_{21}=a_{20}=a_{12}=a_{02}=a_{i-1,i}=a_{i,i-1}=-1$ for $3\leq i\leq n$,
  \item $a_{10}=a_{01}=a_{0,i}=a_{i,0}=a_{1,i}=a_{i,1}=a_{i,j}=a_{j,i}=0$, for $3\leq i\leq n$, $2\leq j\leq n$ and $|i-j|>1$

 \item $[E_0,F_1]=[E_1,F_0]=[E_0,F_j]=[E_1,F_j]=[E_i,F_0]=[E_i,F_1]=[E_i,F_j]=0$, for $2\leq i\neq j\leq n$,

 \item $[E_2,[E_2,E_0]]=[E_0,[E_0,E_2]]=[F_2,[F_2,F_0]]=[F_0,[F_0,F_2]]=0$,

  \item $[E_{i-1},[E_{i-1},E_i]]=[E_{i},[E_{i},E_{i-1}]]=[F_{i-1},[F_{i-1},F_i]]=[F_{i},[F_{i},F_{i-1}]]=0$, for $2\leq i\leq n$,

\item $[E_0,E_1]=[E_0,E_i]=[E_1,E_i]=[E_i,E_j]=0$ for $3\leq i\leq n$, $2\leq j\leq n$ and $|i-j|>1$,
\item $[F_0,F_1]=[F_0,F_i]=[F_1,F_i]=[F_i,F_j]=0$ for $3\leq i\leq n$, $2\leq j\leq n$ and $|i-j|>1$.
  \end{enumerate}
\end{proof}

\subsection{The double of twisted representations}
Use the setting from the previous subsection. Let  $\hat{\alpha}_i^-=\hat{\phi}_{n-i-1}^-$ be the left partial derivative $\partial_i^L$. Now we use creation operators $\{\alpha_i^+\}_{i=0}^{n-1}$ and twisted annihilation operators $\{\hat{\alpha}_i^-\}_{i=0}^{n-1}$ to form representations. These relations show that the double of twisted representations form a finite Clifford algebra.
\begin{thm}
Operators $\{\alpha_i^{+}\}_{i=0}^{n-1}$ and $\{\hat{\alpha}_i^{-}\}_{i=0}^{n-1}$ satisfy the following relations:
\begin{enumerate}
\item $\alpha^+_i\alpha^+_j+\alpha^+_j\alpha^+_i=0$,
\item $\hat{\alpha}^-_i\hat{\alpha}^-_j+\hat{\alpha}^-_j\hat{\alpha}^-_i=0$,
\item $\alpha^+_i\hat{\alpha}^-_j+\hat{\alpha}^-_j\alpha^+_i=\delta_{i,j}$.
\end{enumerate}
\end{thm}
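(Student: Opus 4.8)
The plan is to check the three relations directly on the standard monomial basis of $\bigwedge^*(V(n))$, after recognizing the claimed identities as the canonical anticommutation relations for the fermionic Fock space built on $V(n)$. Write $\phi_S=\phi_{s_1}\wedge\ldots\wedge\phi_{s_d}$ for a subset $S=\{s_1<\ldots<s_d\}$ of $\{0,\ldots,n-1\}$; these $\phi_S$ form a basis, and the only input needed is the description of the generators on this basis. Left multiplication is $\alpha_i^+\phi_S=(-1)^{\sigma_i(S)}\phi_{S\cup\{i\}}$ for $i\notin S$ and $\alpha_i^+\phi_S=0$ for $i\in S$, where $\sigma_i(S)=\#\{s\in S:s<i\}$; by the definition of the left partial derivative, $\hat{\alpha}_i^-\phi_S=(-1)^{\sigma_i(S)}\phi_{S\setminus\{i\}}$ for $i\in S$ and $\hat{\alpha}_i^-\phi_S=0$ for $i\notin S$ (using that deleting $i$ does not change $\sigma_i$). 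Everything then reduces to parity bookkeeping with the statistic $\sigma_i$.

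Relation (1) is immediate: $\alpha_i^+\alpha_j^+\phi_S=\phi_i\wedge\phi_j\wedge\phi_S$, and swapping $i$ and $j$ costs a single transposition sign, so the anticommutator vanishes (and trivially so when $i=j$). Relation (2) is the dual statement: $\hat{\alpha}_i^-\hat{\alpha}_i^-=0$ because $\phi_i$ cannot be deleted twice, and for $i\ne j$ one checks that $\hat{\alpha}_i^-\hat{\alpha}_j^-\phi_S$ and $\hat{\alpha}_j^-\hat{\alpha}_i^-\phi_S$ are nonzero only when $\{i,j\}\subseteq S$, in which case they both equal $\pm\phi_{S\setminus\{i,j\}}$ with opposite signs, the discrepancy being exactly the sign of transposing the two deleted factors.

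The content is Relation (3). First take $i=j$. If $i\in S$ then $\alpha_i^+\phi_S=0$, while $\alpha_i^+\hat{\alpha}_i^-\phi_S=(-1)^{\sigma_i(S)}\alpha_i^+\phi_{S\setminus\{i\}}=(-1)^{2\sigma_i(S)}\phi_S=\phi_S$; if $i\notin S$ then $\hat{\alpha}_i^-\phi_S=0$, while $\hat{\alpha}_i^-\alpha_i^+\phi_S=(-1)^{\sigma_i(S)}\hat{\alpha}_i^-\phi_{S\cup\{i\}}=(-1)^{2\sigma_i(S)}\phi_S=\phi_S$; either way the anticommutator is the identity, as required. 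Now take $i\ne j$, say $i<j$. The anticommutator annihilates $\phi_S$ unless $i\notin S$ and $j\in S$ (in every other subcase either $\alpha_i^+$ or $\hat{\alpha}_j^-$ acts by zero on both orderings). In the surviving subcase $\alpha_i^+\hat{\alpha}_j^-\phi_S=(-1)^{\sigma_j(S)+\sigma_i(S\setminus\{j\})}\phi_{(S\setminus\{j\})\cup\{i\}}$ and $\hat{\alpha}_j^-\alpha_i^+\phi_S=(-1)^{\sigma_i(S)+\sigma_j(S\cup\{i\})}\phi_{(S\setminus\{j\})\cup\{i\}}$; since $i<j$ we have $\sigma_i(S\setminus\{j\})=\sigma_i(S)$ and $\sigma_j(S\cup\{i\})=\sigma_j(S)+1$, so the two terms are equal up to sign and cancel.

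The only real obstacle is consistency of conventions: one must make sure that the sign incurred by inserting $\phi_i$ at the left of a monomial and the sign built into $\partial_i^L$ are governed by the same statistic $\sigma_i$, so that they square away in the $i=j$ case and leave precisely one extra sign (from the new element $i$, which lies below $j$) in the $i\ne j$ case. Once that is pinned down each case is a one-line parity count, and the theorem is exactly the statement that $\bigwedge^*(V(n))$ is the spinor module for the finite-rank Clifford algebra generated by the $\alpha_i^+$ and $\hat{\alpha}_i^-$.
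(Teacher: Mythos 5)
Your verification is correct and follows exactly the route the paper intends: the paper's proof simply says to apply the defining formulas of $\alpha_i^+=\phi_i\wedge(-)$ and $\hat{\alpha}_i^-=\partial_i^L$ to the monomial basis of $\bigwedge^*(V(n))$, and your parity bookkeeping with $\sigma_i(S)$ (correctly reading the sign in $\partial_i^L$ as the position-based one) is just that computation carried out explicitly, yielding the standard CAR. The only cosmetic omission is the subcase $i>j$ in relation (3), which is handled by the same one-line count with $\sigma_i(S\setminus\{j\})=\sigma_i(S)-1$ and $\sigma_j(S\cup\{i\})=\sigma_j(S)$.
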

\begin{proof}
The proof is straightforward by applying the formula in the definitions of the operators to the basis vectors of $\bigwedge^*(V(n))$.
\end{proof}

\section{Further discussions}
For fixed $n$, the double of $R^+_n$ and $R^-_n$ forms $D_{n+1}$-Lie algebra, and the double of $R^+_n$ and $\hat{R}^-_n$ forms a finite Clifford algebra. This leads to the following conjecture stated in \cite{Soi2014}.
\begin{conj}\cite{Soi2014}
  Full COHA for the quiver $A_1$ is isomorphic to the infinite Clifford algebra $Cl_c$ with generators $\phi_n^{\pm},\ n\in\fZ$ and the central element $c$, subject to the standard anticommuting relations between $\phi_n^+$ (resp. $\phi_n^-$) as well as the relation $\phi_n^+\phi_m^-+\phi_m^-\phi_n^+=\delta_{n,m}c$.
\end{conj}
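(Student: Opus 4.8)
The plan is to reduce everything to an explicit computation on the monomial basis $\{\Phi_{\bf k}\}$ of $\bigwedge^*(V(n))$, using the two previous propositions which identify $\alpha_i^+ = \phi_i^+$ with left wedge multiplication by $\phi_i$ and $\hat\alpha_i^- = \hat\phi_{n-i-1}^-$ with the left partial derivative $\partial_i^L$. So the theorem is really the statement that left wedge multiplication and left contraction on an exterior algebra generate a Clifford algebra, and the three relations should each be verified by acting on an arbitrary basis vector $\Phi_{\bf k} = \phi_{k_1}\wedge\dots\wedge\phi_{k_d}$ and separating cases according to whether $\phi_i$ (resp. $\phi_j$) does or does not appear in $\Phi_{\bf k}$.

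First I would record the basic sign bookkeeping: if $\phi_i$ does not appear in $\Phi_{\bf k}$, then $\alpha_i^+\Phi_{\bf k} = (-1)^{a_i}\,\phi_i\wedge_{\text{sorted}}\Phi_{\bf k}$ where $a_i = \#\{\ell : k_\ell < i\}$ is the number of factors that must be hopped over to insert $\phi_i$ in sorted position; and if $\phi_i$ does appear, say $\phi_i = \phi_{k_m}$, then $\partial_i^L\Phi_{\bf k} = (-1)^{m-1}\phi_{k_1}\wedge\dots\wedge\hat\phi_i\wedge\dots\wedge\phi_{k_d}$, with $m-1 = a_i$ by definition of $a_i$. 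Second, for relation (1): if $\phi_i,\phi_j$ with $i\neq j$ both fail to appear, then $\alpha_i^+\alpha_j^+\Phi_{\bf k}$ and $\alpha_j^+\alpha_i^+\Phi_{\bf k}$ produce the same sorted monomial with opposite signs (the extra transposition of $\phi_i,\phi_j$), so they cancel; if either already appears, both sides are zero by $\phi_i\wedge\phi_i = 0$; the case $i=j$ is immediate. Relation (2) is the same argument applied to $\partial_i^L$, $\partial_j^L$: if $\phi_i$ and $\phi_j$ both appear, deleting in the two orders gives opposite signs, and otherwise both sides vanish.

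Third, for the crucial relation (3), I would split on the cases: (a) $i = j$ and $\phi_i$ appears in $\Phi_{\bf k}$ — then $\alpha_i^+\hat\alpha_i^-$ kills it and re-inserts it, returning $\Phi_{\bf k}$ with sign $(-1)^{a_i}(-1)^{a_i} = 1$, while $\hat\alpha_i^-\alpha_i^- = 0$ since $\phi_i\wedge\phi_i = 0$; (b) $i = j$ and $\phi_i$ does not appear — then the first term vanishes and the second inserts then deletes, again giving $+\Phi_{\bf k}$; so the anticommutator is the identity. (c) $i\neq j$: here one checks that in both compositions the relevant sign is $(-1)^{a_i + a_j}$ or $(-1)^{a_i + a_j \mp 1}$ depending on whether $\phi_i$ sits before or after $\phi_j$, arranged so that the two terms cancel; the only subtlety is that inserting $\phi_i$ shifts the position count used to delete $\phi_j$ by exactly one when $i < j$, and this shift is precisely compensated by the opposite shift in the other order. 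The main obstacle is therefore purely the sign accounting in case (c) of relation (3) — keeping straight how insertion of $\phi_i$ changes the index $a_j$ seen by $\partial_j^L$ relative to the reversed order — but this is the standard Clifford computation and presents no conceptual difficulty once the convention $a_i = \#\{\ell: k_\ell < i\}$ is fixed; I would phrase it once and let the remaining subcases follow by symmetry.
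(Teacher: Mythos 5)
There is a genuine gap here, and it is a gap of scope rather than of sign bookkeeping: the statement you are asked about is a \emph{conjecture} about the full (double) COHA of the $A_1$-quiver as an algebra, and the paper does not prove it --- it only states it, citing \cite{Soi2014}, and offers the representation-theoretic computations as evidence. What your proposal actually proves is the finite Clifford relations
$\alpha^+_i\hat{\alpha}^-_j+\hat{\alpha}^-_j\alpha^+_i=\delta_{ij}$, etc., for the operators $\alpha_i^+=\phi_i^+$ (left wedge) and $\hat{\alpha}_i^-=\partial_i^L$ acting on $\bigwedge^*(V(n))$. That is precisely the paper's theorem on the double of twisted representations, whose proof the paper dismisses in one line as a straightforward check on basis vectors --- your case analysis and sign accounting are a correct and adequate expansion of that check, but it is a different and much weaker statement than the conjecture.

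Concretely, the conjecture asserts an isomorphism between the full COHA and the infinite Clifford algebra $Cl_c$ with generators $\phi_n^{\pm}$ for all $n\in\fZ$ (including negative indices, which never appear in the representations studied here) and a nontrivial central element $c$. To prove it one would need, at minimum: a definition or construction of the full COHA itself (the paper only constructs doubles of \emph{representations}, not of the algebra); a map from $Cl_c$ to that algebra; and an argument that the relations verified in the finite-dimensional modules $\bigwedge^*(V(n))$ exhaust all relations --- i.e., some faithfulness or compatibility-in-$n$ statement --- together with a treatment of $c$, which cannot be detected by any single finite-dimensional quotient (indeed the paper's remark notes that in finite-dimensional representations of the untwisted double one has $c\mapsto 0$). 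None of these ingredients appears in your proposal, so while your computation is correct as far as it goes, it does not prove, and could not by itself prove, the stated conjecture; it only reproves the paper's finite-rank representation result that motivates it.
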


\begin{remark}
  As stated in \cite{Soi2014}, in the case of finite-dimensional representations we have $c\mapsto0$ and we see two representations of the infinite Grassmann algebra, which are combined in the representations of the orthogonal Lie algebra.
\end{remark}

 \subsection*{Acknowledgement}
I thank to Yan Soibelman who introduced me to this subject, stated the problem and made multiple comments on the draft of this paper. I thank Hans Franzen for helpful communications, whose paper helped me to simplify my proof. I also thank to Zongzhu Lin, Zhaobin Fan, Jie Ren and Hui Chen for helpful discussions.

\appendix
\renewcommand{\thesection}{\Alph{chapter}.\arabic{section}}


\end{document}